\newtheorem{thm}{Theorem}
\newtheorem{defn}{Definition}
\newtheorem{lemma}{Lemma}
\newtheorem{pro}{Proposition}
\numberwithin{equation}{section} \setcounter{tocdepth}{1}
\def\b{\beta}
\begin{document}
\title [Quadratic stochastic operators of a bisexual population]
{On dynamical systems of quadratic stochastic operators constructed for bisexual populations}

\author{Z.S. Boxonov}

\address{V.I.Romanovskiy Institute of Mathematics of Uzbek Academy of Sciences}

\begin{abstract} For two classes of bisexual populations we give a constructive description of quadratic stochastic operators which act to the  Cartesian product of standard simplexes.
We consider a bisexual population such that the set of females can be partitioned into finitely many different types indexed by $\{1,2,...,n\}$ and, similarly, that the male types are indexed by $\{1,2,...,\nu\}$. Quadratic stochastic operators were constructed for the bisexual population for the cases $n=\nu=2$ and $n=\nu=4$. In both cases, we study dynamical systems generated by the quadratic operators of the bisexual population. We find all fixed points, and limit points of the dynamical systems. Moreover, we give some biological interpretations of our results.
\end{abstract}

\subjclass[2020] {92D25}

\keywords{Quadratic stochastic operator, Bisexual population, Fixed point, Limit point} \maketitle

\section{\bf{Introduction}}

The action of genes is manifested statistically in sufficiently large communities of matching individuals (belonging to the same species). These communities are called populations \cite{L}. The population exists not only in space but also in time, i.e. it has its own life cycle. The basis for this phenomenon is a reproduction by mating. Mating in a population can be free or subject to certain restrictions. The whole population in space and time comprises discrete generations $\mathbb{G}_0,$ $\mathbb{G}_1, . . . .$ The generation $\mathbb{G}_{n+1}$ is the set of individuals whose parents belong to the $\mathbb{G}_n$
generation. A state of a population is a distribution of probabilities of the different types of organisms in every generation. Type partition is called differentiation. The simplest example is sex differentiation. In the bisexual population any kind of differentiation must agree with the sex differentiation, i.e. all the organisms of one type must belong to the same sex. Thus, it is possible to speak of male and female types \cite{L}, \cite{GJ}, \cite{Rpd}.

Dynamics of such population usually investigated by quadratic stochastic operator (QSO) given on standard simplex.

Many problems of natural sciences can be solved by using nonlinear dynamical systems (see \cite{D}, \cite{Rpd}, \cite{BR}, \cite{EJX}).

The notion of QSO was first introduced in \cite{B}. Let us give basic definitions.
A QSO of a free population has the following meaning: Assume that a free population consists of $n$ elements. The set
\begin{equation}\label{Simpleks(n-1)}S^{n-1}=\left\{x=(x_{1},...,x_{n})\in\mathbb{R}^{n}: x_{i}\geq0, \sum_{i=1}^{n}x_{i}=1\right\}
\end{equation}
is called an $(n-1)$ - dimensional simplex.

A QSO that maps a simplex into itself $V:S^{n-1}\rightarrow S^{n-1}$, has the form
\begin{equation}\label{kso}
V: x'_k=\sum_{i=1}^{n} \sum_{j=1}^{n} p_{ij,k}x_{i}x_{j}, \ \ k=1,...,n,
\end{equation}
where $p_{ij,k}$ are the hereditary coefficient and
\begin{equation}\label{k=1}
p_{ij,k}\geq0,\ \ \sum_{k=1}^{n} p_{ij,k}=1,\ \ i,j,k=1,...,n.
\end{equation}
Note that each element $x\in S^{n-1}$ is a probability distribution on $E=\{1,...,n\}.$

The trajectory $\{x^{(m)}\}, m=0,1,2,...,$ for $x^{(0)}\in S^{n-1}$ under the action of the QSO (\ref{kso}) is defined as follows: $x^{(m+1)}=V(x^{(m)}), m=0,1,2,... .$

One of the main problems for operator $V$ in mathematical biology is the investigation of the asymptotic behavior of trajectories. This problem is completely solved for the Volterra QSOs (see \cite{RNG1}, \cite{RNG2}, \cite{RNGandE}) defined by equalities (\ref{kso}) and (\ref{k=1}) and the additional condition
\begin{equation}\label{vcond}
p_{ij,k}=0,\ \ if \ k \notin \{i,j\}.
\end{equation}

Let $F=\{F_1, F_2, ... , F_n\}$ be the set of female types and let $M=\{M_1, M_2, ... ,M_\nu\}$ be the set of male types. The number $n+\nu$ is called the dimension of a population. The state of a population is defined as the pair of probability distributions $x=(x_1, x_2, ... , x_n)$ and $y=(y_1, y_2, ... , y_\nu)$ on the sets $F$ and $M$ respectively(\cite{RJ}):
\begin{equation}
x_i\geq0,\ \ \sum_{i=1}^{n} x_{i}=1;\ \ y_k\geq0,\ \ \sum_{k=1}^{\nu} y_{k}=1.
\end{equation}
The space of states of this population is the Cartesian product $S^{n-1}\times S^{\nu-1}$ of the $(n-1)$-dimensional simplex $S^{n-1}$ and the $(\nu-1)$-dimensional simplex $S^{\nu-1}$. The differentiation of a population is called hereditary if, for any state $(x, y)$ of a generation $\mathbb{G}$ the state $(x', y')$ of the next generation $\mathbb{G}'$ formed by crossing and selection is uniquely determined.

The mapping $V:S^{n-1}\times S^{\nu-1}\rightarrow S^{n-1}\times S^{\nu-1}$ defined by the equality
\begin{equation}\label{W}
(x', y')=V(x, y), \ \ (x, y)\in S^{n-1}\times S^{\nu-1},
\end{equation}
is called the evolution operator. In terms of coordinates, it has the form of the system of equalities
\begin{equation}
\left\{
  \begin{array}{ll}
    x_{i}'=f_{i}(x_1, x_2, ... , x_n, y_1, y_2, ... , y_\nu), & 1\leq i\leq n, \\
    y_{k}'=g_{k}(x_1, x_2, ... , x_n, y_1, y_2, ... , y_\nu), & 1\leq k\leq \nu,
  \end{array}
\right.
\end{equation}
which are also called the evolution equalities. For any initial state $(x^{(0)}, y^{(0)})$ mapping (\ref{W}) uniquely defines the trajectory
\begin{equation}\label{orbit}
\{(x^{(t)}, y^{(t)})\}_{t=0}^{\infty}: (x^{(t+1)}, y^{(t+1)})=V((x^{(t)}, y^{(t)}))=V^{(t+1)}((x^{(0)}, y^{(0)})), \ \ t=0,1,2,....
\end{equation}
The set of the limit points of the trajectory that starts from the point $(x^{(0)}, y^{(0)})$ is called its limit set and is denoted by $\omega(x^{(0)}, y^{(0)}).$ Let us deduce evolution equations for a two-sex population. As a basis, we use the hereditary coefficients $p^{(f)}_{ik,j}$, $p^{(m)}_{ik,l}$. The quantity $p^{(f)}_{ik,j}$ is defined as the probability of the birth of an offspring of the female type $F_j$, $1\leq j\leq n$, for a mother of the type $F_i$, $1\leq i\leq n$, and a father of the type $M_k$, $1\leq k\leq \nu$. The quantity $p^{(m)}_{ik,l}$, $1\leq i\leq n$, $1\leq k, l\leq \nu$  is defined by analogy. It is obvious that
\begin{equation}\label{pfpm}
p^{(f)}_{ik,j}\geq0,\ \ \sum_{j=1}^{n} p^{(f)}_{ik,j}=1;\ \ p^{(m)}_{ik,l}\geq0,\ \ \sum_{l=1}^{\nu} p^{(m)}_{ik,l}=1.
\end{equation}
The hereditary coefficients take into account a number of factors such as, e.g., the recombination process, selection of gametes, mutations, and differential birth rate.
Let $(x, y)$ be a state in the generation $\mathbb{G}$. In the next generation $\mathbb{G}'$, at the time of its germination, the probabilities of the types are determined according to the formula of total probability:
\begin{equation}\label{ksobp}
V:\left\{
  \begin{array}{ll}
    x_{j}'=\sum_{i=1}^{n} \sum_{k=1}^{\nu} p^{(f)}_{ik,j}x_{i}y_{k}, & 1\leq j\leq n, \\[2mm]
    y_{l}'=\sum_{i=1}^{n} \sum_{k=1}^{\nu} p^{(m)}_{ik,l}x_{i}y_{k}, & 1\leq l\leq \nu.
  \end{array}
\right.
\end{equation}
The operator is called quadratic stochastic operator of a bisexual population (QSOBP).

Some QSOs of the bisexual population were studied in \cite{RJ}, \cite{LR}, \cite{DOR}, \cite{CJR}, \cite{GJ}, \cite{Rpd}.

\textbf{The main problem:} Given an operator $V$ and initial point $z^{(0)}\in S^{n-1}\times S^{\nu-1}$ what ultimately happens with the trajectory $z^{(n)}=V(z^{(n-1)}), n=1,2,...?$ Does the limit $\lim_{n\rightarrow\infty}z^{(n)}$ exist? If not what is the set of limit points of the sequence? Is this set finite or infinite \cite{D}.

This problem is a rather difficult in general, because the dynamical system generated by operator (\ref{ksobp}) is a complicated nonlinear system. Therefore, we shall consider two particular cases of the system and for them we completely study the main problem.

\section{\bf{Construction of QSOBP for finite $E$}}

Let $(E, \mathcal{F})$ be a measurable space and $S(E, \mathcal{F})$ be the set of all probability measures on $(E, \mathcal{F})$.
When $E$ is finite, QSO on $S(E, \mathcal{F})=S^{n-1}$ is defined as in (\ref{kso}).

Let $G=(\Lambda, L)$ be finite graph witout loops and multiple edges, where $\Lambda$ is the set of vertexes and $L$ is the set of edges of the graph.

Furthermore, let $\Phi$ be a finite set, called the set of alleles (in problems of statistical mechanics, $\Phi$ is called the range of spin). The function $\sigma:\Lambda\rightarrow\Phi$ is called a cell (in mechanics it is called configuration). Denote by $\Omega$ the set of all cells, this set corresponds to $E$ in (\ref{kso}). Let $S(\Lambda, \Phi)$ be the set of all probability measures defined on the finite set $\Omega.$

Let $\{\Lambda_i, i=1,2,...,n\}$ be the set of maximal connected subgraphs (components) of the graph $G.$

Let us fix a set $F\subset\Omega$ and call it the set of "female", while the set $M=\Omega\setminus F$ is called the set of "male". For a configuration $\sigma\in\Omega$ denote by $\sigma(A)$ its "projection" (or "restruction") to $A\subset\Lambda: \sigma(A)=\{\sigma(x)\}_{x\in A}.$ Fix two cells $\sigma^f_1 \in F, \sigma^m_2 \in M,$ and put
$$\Omega^f(G,\sigma^f_1,\sigma^m_2)=\{\sigma\in F: \sigma(\Lambda_i)=\sigma^f_1(\Lambda_i) \ or \ \sigma(\Lambda_i)=\sigma^m_2(\Lambda_i) \ for \ all \ i=1,...,n\},$$
$$\Omega^m(G,\sigma^f_1,\sigma^m_2)=\{\sigma\in M: \sigma(\Lambda_i)=\sigma_1(\Lambda_i) \ or \ \sigma(\Lambda_i)=\sigma_2(\Lambda_i) \ for \ all \ i=1,...,n\}.$$
Now let $\mu^f\in S(\Lambda, \Phi)$ $(resp.\ \mu^m\in S(\Lambda, \Phi))$ be a probability measure defined on $F$ $(resp.\ M)$ such that $\mu^f(\sigma^f)>0, \mu^m(\sigma^m)>0$ for any cell $\sigma^f\in F$, $\sigma^m\in M$; i. e. $\mu^f$, $\mu^m$,  are Gibbs measures with some potential \cite{P}, \cite{GR}. The heredity coefficients $p^{(f)}_{\sigma^f_1\sigma^m_2,\sigma^f}$, $p^{(m)}_{\sigma^f_1\sigma^m_2,\sigma^m}$ are defined as
\begin{equation}\label{hercoef}
\begin{array}{ll}
    p^f_{\sigma^f_1\sigma^m_2,\sigma^f}=\left\{
                                            \begin{array}{ll}
                                              \frac{\mu^f(\sigma)}{\mu^f(\Omega^f(G,\sigma^f_1,\sigma^m_2))}, & \hbox{if \ \ $\sigma\in \Omega^f(G,\sigma^f_1,\sigma^m_2)$} \\
                                              0, & \hbox{otherwise,}
                                            \end{array}
                                          \right.\\[5mm]
    p^m_{\sigma^f_1\sigma^m_2,\sigma^m}=\left\{
                                            \begin{array}{ll}
                                              \frac{\mu^m(\sigma)}{\mu^m(\Omega^m(G,\sigma^f_1,\sigma^m_2))}, & \hbox{if \ \ $\sigma\in \Omega^m(G,\sigma^f_1,\sigma^m_2)$} \\
                                              0, & \hbox{otherwise.}
                                            \end{array}
                                          \right.
  \end{array}
\end{equation}
Obviously, $p^f_{\sigma^f_1\sigma^m_2,\sigma^f}\geq0$, $p^m_{\sigma^f_1\sigma^m_2,\sigma^m}\geq0$ and $\sum\limits_{\sigma\in F}p^f_{\sigma^f_1\sigma^m_2,\sigma^f}=1$, $\sum\limits_{\sigma\in M}p^{(m)}_{\sigma^f_1\sigma^m_2,\sigma^m}=1.$

The QSOBP $V\equiv V_\mu$ acting on the simplex $S(\Lambda, \Phi)$ and determined by coefficients (\ref{hercoef}) is defined as follows: for an arbitrary measure $\lambda\in S(\Lambda, \Phi)$, the measure $V(\lambda)=\lambda'\in S(\Lambda, \Phi)$ is defined by the equality
\begin{equation}\label{m.kso}
\left\{\begin{array}{ll}
    \lambda'(\sigma^f)=\sum\limits_{\sigma^f_1\in F, \sigma^m_2\in M}p^f_{\sigma^f_1\sigma^m_2,\sigma^f}\lambda(\sigma^f_1)\lambda(\sigma^m_2), \ \ \sigma^f\in F,\\[5mm]
    \lambda'(\sigma^m)=\sum\limits_{\sigma^f_1\in F, \sigma^m_2\in M}p^m_{\sigma^f_1\sigma^m_2,\sigma^m}\lambda(\sigma^f_1)\lambda(\sigma^m_2), \ \ \sigma^m\in M.
  \end{array}\right.
\end{equation}
Thus Gibbs measures arise in a natural way when QSOBPs are constructed.
\begin{thm}\label{t1} The QSOBP (\ref{m.kso}) is the identity operator if and only if graph G is connected.
\end{thm}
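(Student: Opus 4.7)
The plan is to handle the two implications separately: ``$G$ connected $\Rightarrow$ identity'' is a direct unpacking of the definitions, while ``disconnected $\Rightarrow$ not identity'' is the main substance and rests on a hybridization argument.

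\textbf{Sufficiency.} When $G$ is connected there is a single component $\Lambda_1=\Lambda$, so $\sigma(\Lambda_1)=\sigma$ for every cell $\sigma\in\Omega$. Fixing $\sigma_1^f\in F$ and $\sigma_2^m\in M$, the defining condition of $\Omega^f(G,\sigma_1^f,\sigma_2^m)$ reduces to ``$\sigma=\sigma_1^f$ or $\sigma=\sigma_2^m$.'' Intersecting with $F$ and using $F\cap M=\emptyset$ yields $\Omega^f(G,\sigma_1^f,\sigma_2^m)=\{\sigma_1^f\}$; symmetrically $\Omega^m(G,\sigma_1^f,\sigma_2^m)=\{\sigma_2^m\}$. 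By (\ref{hercoef}) the heredity coefficients collapse to the Kronecker symbols $p^f_{\sigma_1^f\sigma_2^m,\sigma^f}=\delta_{\sigma^f,\sigma_1^f}$ and $p^m_{\sigma_1^f\sigma_2^m,\sigma^m}=\delta_{\sigma^m,\sigma_2^m}$, so (\ref{m.kso}) gives $\lambda'(\sigma^f)=\lambda(\sigma^f)\sum_{\sigma_2^m\in M}\lambda(\sigma_2^m)$ and analogously for $\lambda'(\sigma^m)$. Since the $F$- and $M$-marginals of $\lambda$ are each probability distributions, both sums equal $1$, hence $V(\lambda)=\lambda$.

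\textbf{Necessity.} Assume $G$ has $n\ge 2$ components. For any parents $\sigma_1^f,\sigma_2^m$, the union $\Omega^f\cup\Omega^m$ is exactly the set of ``hybrid'' cells obtained by picking, component by component, one of the restrictions $\sigma_1^f|_{\Lambda_i}$ or $\sigma_2^m|_{\Lambda_i}$. If one can choose $\sigma_1^f,\sigma_2^m$ disagreeing on $k\ge 2$ components, there are $2^k\ge 4$ such hybrids, so at least one of $\Omega^f$, $\Omega^m$ must contain a cell $\sigma^\ast$ different from both parents. Since $\mu^f,\mu^m$ are strictly positive on every cell, the corresponding coefficient in (\ref{hercoef}) lies strictly between $0$ and $1$. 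Applying (\ref{m.kso}) to the measure concentrated on $\{\sigma_1^f,\sigma_2^m\}$ then produces $\lambda'(\sigma^\ast)>0=\lambda(\sigma^\ast)$, so $V$ cannot be the identity.

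\textbf{Main obstacle.} The delicate point is the existence of such a parental pair differing on at least two components for an arbitrary sex-partition. I would address this by contradiction: if every pair $(\sigma_1^f,\sigma_2^m)\in F\times M$ disagreed on only a single component of $G$, this would impose a product-like rigidity on $F\sqcup M$ that I expect to be incompatible with $n\ge 2$ together with the fullness of the Gibbs measures $\mu^f,\mu^m$. This combinatorial step is the real content of the necessity direction; everything else is direct bookkeeping with the definitions (\ref{hercoef})--(\ref{m.kso}).
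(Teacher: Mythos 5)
Your sufficiency argument is essentially identical to the paper's proof of Theorem \ref{t1}: for connected $G$ the hybrid sets collapse to $\{\sigma^f_1\}$ and $\{\sigma^m_2\}$, the coefficients (\ref{hercoef}) become Kronecker deltas, and (\ref{m.kso}) reduces to the computation (\ref{i.kso}). Note that the paper proves \emph{only} this direction and is silent on necessity, so your attempt to supply the converse goes beyond what the paper actually does. Within your necessity argument, the reduction is sound up to the point you flag: if some pair $(\sigma^f_1,\sigma^m_2)\in F\times M$ disagrees on $k\ge 2$ components, then among the $2^k\ge 4$ hybrids at least one non-parent cell $\sigma^\ast$ lies in $\Omega^f$ or $\Omega^m$, its coefficient is strictly positive because $\mu^f,\mu^m$ charge every cell, and evaluating $V$ at the state concentrated on the parents gives $\lambda'(\sigma^\ast)>0=\lambda(\sigma^\ast)$.

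The step you call the ``main obstacle'' is, however, a genuine gap, and it cannot be closed: the existence of a female--male pair disagreeing on two or more components depends on the choice of $F$, not only on the disconnectedness of $G$. Take $G$ with two isolated vertices and $\Phi=\{1,2\}$, so $\Omega=\{11,12,21,22\}$, and choose $F=\{11,22\}$, $M=\{12,21\}$. Every pair in $F\times M$ agrees on exactly one component, so each hybrid set is just the pair of parents, $\Omega^f(G,\sigma^f_1,\sigma^m_2)=\{\sigma^f_1\}$ and $\Omega^m(G,\sigma^f_1,\sigma^m_2)=\{\sigma^m_2\}$, the coefficients are again Kronecker deltas, and $V$ is the identity even though $G$ is disconnected. (The worked example of (\ref{exam}) avoids this only because there $F=\{11,12\}$, so the pair $(12,21)$ disagrees on both vertices.) Thus the ``product-like rigidity'' from which you hope to derive a contradiction is actually realizable, and the ``only if'' direction of the theorem is false as stated without an additional hypothesis on $F$; the correct characterization your argument actually establishes is that $V$ is the identity if and only if every pair in $F\times M$ differs on at most one component of $G$, which connectedness guarantees but disconnectedness does not preclude.
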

\begin{proof} Let G be a connected graph. Then, fix two cells $\sigma^f_1 \in F, \sigma^m_2 \in M,$ we have
$$\Omega^f(G,\sigma^f_1,\sigma^m_2)=\{\sigma^f_1\},\ \ \Omega^m(G,\sigma^f_1,\sigma^m_2)=\{\sigma^m_2\}$$
and  the heredity coefficients as follows:
\begin{equation}\label{herid}
\begin{array}{ll}
    p^f_{\sigma^f_1\sigma^m_2,\sigma^f}=\left\{
                                            \begin{array}{ll}
                                              1, & \hbox{if \ \ $\sigma\in \Omega^f(G,\sigma^f_1,\sigma^m_2)$} \\
                                              0, & \hbox{otherwise,}
                                            \end{array}
                                          \right.\\[5mm]
    p^m_{\sigma^f_1\sigma^m_2,\sigma^m}=\left\{
                                            \begin{array}{ll}
                                              1, & \hbox{if \ \ $\sigma\in \Omega^m(G,\sigma^f_1,\sigma^m_2)$} \\
                                              0, & \hbox{otherwise.}
                                            \end{array}
                                          \right.
  \end{array}
\end{equation}
If the coefficients of QSOBP (\ref{m.kso}) will be described such (\ref{herid}), we have
\begin{equation}\label{i.kso}
\left\{\begin{array}{ll}
    \lambda'(\sigma^f)=\lambda(\sigma^f)\sum\limits_{\sigma^m_2\in M}\lambda(\sigma^m_2)=\lambda(\sigma^f), \ \  \sigma^f\in F,\\[5mm]
    \lambda'(\sigma^m)=\lambda(\sigma^m)\sum\limits_{\sigma^f_1\in F}\lambda(\sigma^f_1)=\lambda(\sigma^m), \ \ \sigma^m\in M.
  \end{array}\right.
\end{equation}
\end{proof}

\section{\bf{Construction of a QSOBP on $S^1\times S^1$}}

Let $G$ be a graph consisting of only two vertices. Moreover, let $\Phi=\{1, 2\}.$

$\Omega=\{\sigma_1, \sigma_2, \sigma_3, \sigma_4\}$ is the set of all cells, where $\sigma_1=\{1,1\}$, $\sigma_2=\{1,2\}$, $\sigma_3=\{2,1\}$, $\sigma_4=\{2,2\}$.

$F=\{\sigma^f_i\}$,\ $i=1,2$ is the set of females, $M=\{\sigma^f_j\}$,\ $j=3,4$ is the set of males.
\begin{equation}\label{exam}
\begin{array}{ll}
    \Omega^f(G,\sigma^f_i,\sigma^m_j)=\left\{
                                            \begin{array}{ll}
                                              \{\sigma^f_1, \sigma^f_2\}, & \hbox{if \ \ $i=2$, $j=3,$} \\
                                              \{\sigma^f_i\}, & \hbox{otherwise,}
                                            \end{array}
                                          \right.\\[5mm]
    \Omega^m(G,\sigma^f_i,\sigma^m_j)=\left\{
                                            \begin{array}{ll}
                                              \{\sigma^m_3, \sigma^m_4\}, & \hbox{if \ \ $i=2$, $j=3,$} \\
                                              \{\sigma^m_j\}, & \hbox{otherwise.}
                                            \end{array}
                                          \right.
  \end{array}
\end{equation}
From (\ref{exam}) we obtain hereditary coefficients as follows:
\begin{equation}\label{ehc}
\begin{array}{ll}
    p^f_{\sigma^f_i\sigma^m_j,\sigma^f_k}=\left\{
                                            \begin{array}{ll}
                                              a, & \hbox{if \ \ $i=2,\ j=3, \ k=1,$} \\
                                             1-a, & \hbox{if \ \ $i=2,\ j=3, \ k=2,$} \\
                                             1, & \hbox{if \ \ $i=k=1,\ j=3,4$ \ or\  $i=k=2,\ j=4,$} \\
                                             0, & \hbox{otherwise,}
                                            \end{array}
                                          \right.\\[5mm]
    p^m_{\sigma^f_i\sigma^m_j,\sigma^m_l}=\left\{
                                            \begin{array}{ll}
                                              b, & \hbox{if \ \ $i=2,\ j=3, \ l=3,$} \\
                                             1-b, & \hbox{if \ \ $i=2,\ j=3, \ l=4,$} \\
                                             1, & \hbox{if \ \ $i=1, \ j=l=3,4$ \ or\  $i=2,\ j=l=4,$} \\
                                             0, & \hbox{otherwise.}
                                            \end{array}
                                          \right.
  \end{array}
\end{equation}
where $$a=\frac{\mu^f(\sigma^f_1)}{\mu^f(\sigma^f_1)+\mu^f(\sigma^f_2)},\ b=\frac{\mu^m(\sigma^m_3)}{\mu^m(\sigma^m_3)+\mu^m(\sigma^m_4)}.$$

If the coefficients of QSOBP (\ref{m.kso}) given as (\ref{ehc}), then we have
\begin{equation}\label{exam.kso}
V:\left\{\begin{array}{ll}
    \lambda'(\sigma^f_1)=\lambda(\sigma^f_1)\lambda(\sigma^m_3)+\lambda(\sigma^f_1)\lambda(\sigma^m_4)+a\lambda(\sigma^f_2)\lambda(\sigma^m_3),\\
    \lambda'(\sigma^f_2)=(1-a)\lambda(\sigma^f_2)\lambda(\sigma^m_3)+\lambda(\sigma^f_2)\lambda(\sigma^m_4),\\
    \lambda'(\sigma^m_3)=\lambda(\sigma^f_1)\lambda(\sigma^m_3)+b\lambda(\sigma^f_2)\lambda(\sigma^m_3),\\
    \lambda'(\sigma^m_4)=\lambda(\sigma^f_1)\lambda(\sigma^m_4)+\lambda(\sigma^f_2)\lambda(\sigma^m_4)+(1-b)\lambda(\sigma^f_2)\lambda(\sigma^m_3).
  \end{array}\right.
\end{equation}
Denote
\begin{equation}\label{xyuv}
x_1=\lambda(\sigma^f_1),\ x_2=\lambda(\sigma^f_2), \ y_1=\lambda(\sigma^m_3), \ y_2=\lambda(\sigma^m_4).
\end{equation}
From $\lambda(\sigma^f_1)+\lambda(\sigma^f_2)=1$, $\lambda(\sigma^m_3)+\lambda(\sigma^m_4)=1$ and (\ref{xyuv}), we can write operator $V$ in the form:
\begin{equation}\label{xyuvkso}
V:\left\{\begin{array}{ll}
    x_1'=x_1+ax_2y_1,\\
    x_2'=(1-a)x_2y_1+x_2y_2,\\
    y_1'=x_1y_1+bx_2y_1,\\
    y_2'=y_2+(1-b)x_2y_1.
  \end{array}\right.
\end{equation}
Since $(x_1,x_2;y_1,y_2)\in S^1\times S^1$, we have that $x_2=1-x_1$ and $y_2=1-y_1$ (we denote $x_1=x, y_1=y$).
One can rewrite the QSOBP (\ref{xyuvkso}) in the following form
\begin{equation}\label{wkso}
W:\left\{\begin{array}{ll}
    x'=x+a(1-x)y,\\
    y'=y(x+b(1-x)),
    \end{array}\right.
\end{equation}
where $a,b\in(0,1),$ and $W:S^1_0\rightarrow S^{1}_{0},$ $S^1_0=\{(x,y)\in\mathbb{R}^2: x,y\in[0,1]\}.$ Thus we get a quadratic operator with two independent parameters.
\subsection{Fixed points of (\ref{xyuvkso})}\

Let us first find the fixed points of the operator $W$, defined as (\ref{wkso}).
To find fixed points of (\ref{wkso}) we should solve the following
\begin{equation}
\begin{array}{ll}
    x=x+a(1-x)y,\\
    y=y(x+b(1-x)).
    \end{array}
\end{equation}
Denote by $Fix(W)$ the set of all fixed points of the operator $W$ given by (\ref{wkso}).

The following proposition is straightforward.
\begin{pro}\label{wfix} The set $Fix(W)$ has the following form
$$Fix(W)=Z_1\bigcup Z_2$$
where
\begin{equation}\label{z1z2z3}
Z_1=\{(x,y): x\in [0,1), y=0\},\  Z_2=\{(x,y): x=1, y\in[0,1]\}.
\end{equation}
\end{pro}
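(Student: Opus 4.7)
The plan is to solve the fixed-point system directly; since the author calls the proposition ``straightforward,'' the task is essentially algebraic manipulation followed by a careful parametrization of the solution set.

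First I would substitute into the two defining equations. The first equation $x = x + a(1-x)y$ collapses to $a(1-x)y = 0$, and the second equation $y = y(x + b(1-x))$ rewrites as $y\bigl(1 - x - b(1-x)\bigr) = 0$, i.e.\ $(1-b)(1-x)y = 0$. This is the key step: both equations reduce to the single algebraic condition $(1-x)y = 0$, since the hypothesis $a,b \in (0,1)$ gives $a \neq 0$ and $1-b \neq 0$, allowing these factors to be cancelled.

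Next I would conclude that every fixed point $(x,y) \in S^1_0 = [0,1]^2$ must satisfy $x = 1$ or $y = 0$. Conversely, any point with $x = 1$ turns both right-hand sides of (\ref{wkso}) into $x' = 1$ and $y' = y(1 + 0) = y$, and any point with $y = 0$ gives $x' = x + 0 = x$ and $y' = 0$, so all such points are fixed. Thus the fixed point set is precisely
\begin{equation*}
\{(x,y) \in [0,1]^2 : x = 1\} \ \cup\ \{(x,y) \in [0,1]^2 : y = 0\}.
\end{equation*}

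Finally I would reconcile this description with the statement by writing the union as the disjoint union of $Z_2 = \{(1,y) : y \in [0,1]\}$ and $Z_1 = \{(x,0) : x \in [0,1)\}$, where the point $(1,0)$ is assigned to $Z_2$ to avoid double counting. This yields $\mathrm{Fix}(W) = Z_1 \cup Z_2$ as in (\ref{z1z2z3}). I do not anticipate any genuine obstacle; the only thing to be careful about is explicitly invoking $a \neq 0$ and $b \neq 1$ before cancelling, and handling the overlap at $(1,0)$ cleanly.
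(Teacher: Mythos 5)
Your proof is correct and matches the paper's intent exactly: the paper states the proposition as ``straightforward'' after displaying the same fixed-point system, and your reduction to $(1-x)y=0$ via $a\neq 0$ and $1-b\neq 0$, together with the verification of the converse and the assignment of $(1,0)$ to $Z_2$, is precisely the omitted argument. Nothing further is needed.
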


From Proposition \ref{wfix} we obtain the following on fixed points of the operator (\ref{xyuvkso}).
\begin{pro}\label{vfix} The set $Fix(V)$ has the following form
$$Fix(V)=\mathcal{Z}_{1}\bigcup \mathcal{Z}_{2}$$
where
\begin{equation}\label{v1v2} \begin{array}{cc}
\mathcal{Z}_{1}=\{(x_1,x_2;y_1,y_2): x_1\in [0,1),x_2=1-x_1, y_1=0,y_2=1\},\\
\mathcal{Z}_{2}=\{(x_1,x_2;y_1,y_2): x_1=1,x_2=0;y_1\in[0,1],y_2=1-y_1\}.\end{array}
\end{equation}
\end{pro}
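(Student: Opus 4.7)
The plan is to deduce Proposition \ref{vfix} directly from Proposition \ref{wfix} by exploiting the coordinate change that produced $W$ from $V$ in the first place. Recall that the passage from (\ref{xyuvkso}) to (\ref{wkso}) used only the simplex identities $x_2=1-x_1$ and $y_2=1-y_1$, and that $V$ preserves these identities (one checks that $x_1'+x_2' = x_1 + x_2(y_1+y_2) = 1$ and similarly $y_1'+y_2'=1$). Consequently the map $(x_1,x_2;y_1,y_2)\mapsto (x_1,y_1)$ is a bijection from $S^1\times S^1$ onto $S^1_0$ that intertwines $V$ and $W$.

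First I would make this intertwining explicit: a point $(x_1,x_2;y_1,y_2)\in S^1\times S^1$ satisfies $V(x_1,x_2;y_1,y_2)=(x_1,x_2;y_1,y_2)$ if and only if its image $(x,y)=(x_1,y_1)\in S^1_0$ satisfies $W(x,y)=(x,y)$. Indeed, the first and third equations of (\ref{xyuvkso}) coincide with the two equations of (\ref{wkso}) once we substitute $x_2=1-x_1$, $y_2=1-y_1$; and the second and fourth equations are then automatic because $V$ preserves the simplex constraints.

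Next I would translate the two components of $Fix(W)$ back through this bijection. The set $Z_1=\{(x,y): x\in[0,1),\, y=0\}$ lifts to $\{(x_1,1-x_1;0,1): x_1\in[0,1)\}=\mathcal{Z}_1$, and the set $Z_2=\{(x,y): x=1,\, y\in[0,1]\}$ lifts to $\{(1,0;y_1,1-y_1): y_1\in[0,1]\}=\mathcal{Z}_2$. By Proposition \ref{wfix} these are exactly all fixed points of $W$, and by the intertwining above they give precisely $Fix(V)$.

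There is no real obstacle here: the only thing to be careful about is verifying that the equations for $x_2'$ and $y_2'$ in (\ref{xyuvkso}) do not impose additional constraints beyond those already captured by the equations for $x_1'$ and $y_1'$ together with the simplex identities, and this is a one-line computation using $\sum_k p^{(f)}_{ij,k}=\sum_l p^{(m)}_{ij,l}=1$. Once that is noted, Proposition \ref{vfix} follows immediately as a corollary of Proposition \ref{wfix}.
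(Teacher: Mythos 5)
Your argument is correct and is exactly the route the paper takes: Proposition \ref{vfix} is stated there as an immediate consequence of Proposition \ref{wfix} via the substitution $x_2=1-x_1$, $y_2=1-y_1$, and your verification that $V$ preserves the identities $x_1'+x_2'=1$, $y_1'+y_2'=1$ (so the second and fourth equations impose nothing new) is precisely the justification the paper leaves implicit.
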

\subsection{Type of fixed points}\

Now we shall examine the type of the fixed points.
\begin{defn}\label{d1}
(see\cite{D}) A fixed point $s$ of the operator $W$ is called hyperbolic if its Jacobian $J$ at $s$ has no eigenvalues on the unit circle.
\end{defn}

\begin{defn}\label{d2}
(see\cite{D}) A hyperbolic fixed point $s$ called:

1) attracting if all the eigenvalues of the Jacobi matrix $J(s)$ are less than 1 in absolute value;

2) repelling if all the eigenvalues of the Jacobi matrix $J(s)$ are greater than 1 in absolute value;

3) a saddle otherwise.
\end{defn}
To find the type of a fixed point of the operator (\ref{wkso}) we write the Jacobi matrix:
$$J_{W}=\left(%
\begin{array}{cc}
  1-ay & a(1-x) \\
  y(1-b) & x(1-b)+b \\
\end{array}%
\right).$$

It is clear that for any values of the parameters $a,b\in(0,1)$, one of eigenvalues of a Jacobian matrix at fixed points $\forall z_1\in Z_1$ and $\forall z_2\in Z_2\setminus\{(1,0)\} $    is always 1 and the other one is less than 1 while both eigenvalues of the Jacobian matrix at a fixed point $z_3=(1,0)$  are 1.

For the type of fixed points of the operator $W$ defined by (\ref{wkso}), the following Proposition holds:

\begin{pro}
The fixed points $z_1\in Z_1$, $z_2\in Z_2$ are non-hyperbolic (but $z_1\in Z_1$, $z_2\in Z_2\setminus\{z_3\}$ are semi-attracting \footnote{meaning that the second eigenvalue is less than 1 in absolute value.}).
\end{pro}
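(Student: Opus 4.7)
The plan is to directly read the eigenvalues of the Jacobian matrix $J_W$ at each fixed point, using the fact that at every point of $Z_1 \cup Z_2$ either $y=0$ or $x=1$, which makes $J_W$ triangular.

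First I would handle $z_1 = (x,0)$ with $x \in [0,1)$. Substituting $y=0$ into the displayed Jacobian gives
\begin{equation*}
J_W(z_1)=\begin{pmatrix} 1 & a(1-x) \\ 0 & x(1-b)+b \end{pmatrix},
\end{equation*}
which is upper triangular, so its eigenvalues are $\lambda_1=1$ and $\lambda_2=x+b(1-x)$. Since $x\in[0,1)$ and $b\in(0,1)$, $\lambda_2$ lies in $[b,1)\subset(0,1)$, so $|\lambda_2|<1$. Because $\lambda_1=1$ sits on the unit circle, $z_1$ is non-hyperbolic, and because $|\lambda_2|<1$, $z_1$ is semi-attracting in the sense of the footnote.

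Next I would handle $z_2=(1,y)$ with $y\in[0,1]$. Substituting $x=1$ gives
\begin{equation*}
J_W(z_2)=\begin{pmatrix} 1-ay & 0 \\ y(1-b) & 1 \end{pmatrix},
\end{equation*}
now lower triangular, with eigenvalues $\mu_1=1-ay$ and $\mu_2=1$. So again one eigenvalue is on the unit circle, confirming non-hyperbolicity. For $y\in(0,1]$ and $a\in(0,1)$, we have $\mu_1\in[1-a,1)\subset(0,1)$, so $|\mu_1|<1$ and $z_2$ is semi-attracting; that is, for every $z_2\in Z_2\setminus\{(1,0)\}$.

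The only remaining case is the corner $z_3=(1,0)$, where both eigenvalues collapse to $1$, so $z_3$ is non-hyperbolic but not semi-attracting (consistent with the statement, which excludes $z_3$ from the semi-attracting claim). There is no real obstacle here; the proof is just substitution into the Jacobian and exploiting triangularity. The only subtlety worth emphasizing is that the explicit bounds on $\lambda_2$ and $\mu_1$ use $a,b\in(0,1)$ in an essential way, since at the boundary parameter values one would lose strict inequality.
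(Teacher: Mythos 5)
Your proposal is correct and follows essentially the same route as the paper: the paper likewise writes down the Jacobi matrix $J_W$ and observes that at every point of $Z_1$ and of $Z_2\setminus\{(1,0)\}$ one eigenvalue equals $1$ while the other has absolute value less than $1$, and that both eigenvalues equal $1$ at $(1,0)$. Your write-up merely makes explicit the triangular structure and the bounds $\lambda_2\in[b,1)$ and $\mu_1\in[1-a,1)$, which the paper leaves as ``clear.''
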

\subsection{The set of limit points}\

In this subsection for any initial point $(x^{(0)}, y^{(0)})\in S_0^1$ we investigate behavior of the trajectories $(x^{(n)},y^{(n)})=W^n(x^{(0)}, y^{(0)}), n\geq1.$

A set $A$ is called invariant with respect to $W$ if $W(A)\subset A.$

Denote
\begin{equation}\label{defMc}
M_c=\{(x,y)\in S^1_0:\frac{x}{a}+\frac{y}{1-b}=c\},\  c=const.
\end{equation}
\begin{lemma}\label{lem1} The sets $M_c$ are invariant with respect to the operator $W$, i.e. $W(M_c)\subset M_c$.
\end{lemma}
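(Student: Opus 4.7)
The plan is to prove the invariance by direct substitution: take an arbitrary point $(x,y) \in M_c$, apply $W$, and verify that the image $(x',y')$ still satisfies the defining equation $\frac{x'}{a} + \frac{y'}{1-b} = c$. Since $M_c$ is defined by a single linear equation, this is a one-line identity to check once the formulas for $x'$ and $y'$ from \eqref{wkso} are plugged in.

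Concretely, I would compute
\begin{equation*}
\frac{x'}{a} + \frac{y'}{1-b} = \frac{x + a(1-x)y}{a} + \frac{y\bigl(x + b(1-x)\bigr)}{1-b} = \frac{x}{a} + (1-x)y + \frac{y\bigl(x + b(1-x)\bigr)}{1-b},
\end{equation*}
and then factor out $y$ from the last two terms. The expression in brackets becomes
\begin{equation*}
(1-x) + \frac{x + b(1-x)}{1-b} = \frac{(1-x)(1-b) + x + b(1-x)}{1-b} = \frac{(1-x) + x}{1-b} = \frac{1}{1-b},
\end{equation*}
where the crucial cancellation is $(1-x)(1-b) + b(1-x) = (1-x)$. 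Substituting back yields $\frac{x'}{a} + \frac{y'}{1-b} = \frac{x}{a} + \frac{y}{1-b} = c$, which shows $(x',y') \in M_c$.

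There is essentially no obstacle here: the statement is a purely algebraic identity, and the parameters $a, b \in (0,1)$ ensure the denominators $a$ and $1-b$ are nonzero so the level sets are well defined. The only subtlety worth mentioning is that one should also verify $(x',y') \in S^1_0$, i.e.\ that $x', y' \in [0,1]$; this is automatic from the fact that $W$ maps $S^1_0$ to itself (which is built into the construction of QSOBP in Section~3), so no separate argument is needed. Thus the lemma reduces to the single display above.
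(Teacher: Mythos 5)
Your proof is correct and follows essentially the same route as the paper: the paper computes $\frac{x'-x}{a}=(1-x)y$ and $\frac{y-y'}{1-b}=(1-x)y$ and adds, which is the same algebraic cancellation you carry out by direct substitution. No gaps.
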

\begin{proof}  From the first and second equalities of (\ref{wkso}) we find
$$\frac{x'-x}{a}=(1-x)y,\ \frac{y-y'}{1-b}=(1-x)y.$$
Consequently
$$\frac{x'}{a}+\frac{y'}{1-b}=\frac{x}{a}+\frac{y}{1-b}.$$
\end{proof}
Note that
\begin{equation}\label{Mc}
S^1_0=\bigcup_cM_c.
\end{equation}
By (\ref{Mc}) it suffices to study the trajectories on each invariant set.
\begin{thm}\label{Wlimit} If $(x^{(0)}, y^{(0)})\in M_c\setminus Fix(W)$ then for the operator (\ref{wkso}) the following holds
$$\lim_{n\to \infty}W^n((x^{(0)}, y^{(0)}))=\left\{\begin{array}{ll}
 z^*_1=(ac, 0), &  \mbox{if} \ \ ac<1,\\[2mm]
 z^*_2=\left(1,\frac{(ac-1)(1-b)}{a}\right), &  \mbox{if} \ \ ac\geq1,\\
 \end{array}\right.$$
where $z^*_i\in Z_i, \ i=1, 2$; $Z_i$ are defined in (\ref{z1z2z3}) and $c=x^{(0)}/a+y^{(0)}/(1-b).$
\end{thm}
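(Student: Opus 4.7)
The plan is to exploit three features of the dynamics on an invariant line $M_c$: (a) monotonicity of each coordinate along the trajectory, (b) that any accumulation point of a trajectory is a fixed point of $W$, and (c) the explicit form of $M_c$, which lets us read off the limit from its intersection with $\mathrm{Fix}(W)$.

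First, I would establish the monotonicity. From $x' - x = a(1-x)y$ and $y' - y = y\bigl(b + (1-b)x - 1\bigr) = -(1-b)(1-x)y$, it is immediate that for $(x,y)\in S^1_0$ with $x<1$ and $y>0$ one has $x'>x$ and $y'<y$; and both differences vanish precisely on $\mathrm{Fix}(W) = Z_1 \cup Z_2$. Hence if $(x^{(0)},y^{(0)})\notin \mathrm{Fix}(W)$, the sequence $x^{(n)}$ is strictly increasing and $y^{(n)}$ is strictly decreasing. Since both are bounded (living in $[0,1]$), they converge to some $x^*\in(x^{(0)},1]$ and $y^*\in[0,y^{(0)})$. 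By continuity of $W$, the pair $(x^*,y^*)$ is a fixed point, so by Proposition \ref{wfix} we must have either $y^* = 0$ or $x^* = 1$.

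Next, I would pin down which alternative occurs using Lemma \ref{lem1}. The invariance of $M_c$ implies $x^{(n)}/a + y^{(n)}/(1-b) = c$ for every $n$, and passing to the limit gives the same identity for $(x^*,y^*)$. Now split:

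\textbf{Case $ac<1$.} Since $y^{(n)}\geq 0$, the invariance forces $x^{(n)} = a\bigl(c - y^{(n)}/(1-b)\bigr) \le ac < 1$ for all $n$; therefore $x^* \le ac < 1$, so necessarily $y^* = 0$, and the constraint on $M_c$ then gives $x^* = ac$, i.e.\ the limit is $z_1^* = (ac,0)\in Z_1$.

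\textbf{Case $ac\geq 1$.} Since $x^{(n)}\leq 1$, the invariance forces $y^{(n)} = (1-b)\bigl(c - x^{(n)}/a\bigr) \ge (1-b)(ac-1)/a\geq 0$ for all $n$. If $ac>1$ this lower bound is strictly positive, hence $y^*>0$, forcing $x^*=1$, and the $M_c$ constraint then yields $y^* = (ac-1)(1-b)/a$, which is the claimed $z_2^*\in Z_2$. The boundary subcase $ac = 1$ is consistent because the two formulas coincide at $(1,0)$.

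The only mild subtlety will be confirming that the trajectory never lands on $\mathrm{Fix}(W)$ in finite time (so the strict monotonicity above really applies at every step). This follows because $a,b\in(0,1)$ and $y^{(n)}\in[0,1]$ together with $y' = y(b+(1-b)x)>0$ whenever $y>0$, and $1 - x' = (1-x)(1-ay^{(n)}) > 0$ whenever $x<1$ (using $ay^{(n)}\le a<1$). Beyond this, the argument is essentially the standard monotone-on-invariant-curve scheme, so no part should present a serious obstacle.
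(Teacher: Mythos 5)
Your proposal is correct and follows essentially the same route as the paper: monotonicity of $x^{(n)}$ and $y^{(n)}$ from the update equations, convergence of bounded monotone sequences, the conserved quantity $x/a + y/(1-b) = c$ from Lemma \ref{lem1}, and identification of the limit as the unique point of $M_c \cap Fix(W)$. The only cosmetic difference is that you case-split directly on $ac$ versus $1$ (with explicit a priori bounds on $x^{(n)}$ and $y^{(n)}$), whereas the paper splits on $x^*<1$ versus $x^*=1$; your added check that the orbit never hits $Fix(W)$ in finite time is harmless but unnecessary, since non-strict monotonicity already suffices.
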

\begin{proof}
From (\ref{wkso}) we get
\begin{equation}\label{wn+1}
W^{(n+1)}((x^{(0)}, y^{(0)})):\left\{\begin{array}{ll}
    x^{(n+1)}-x^{(n)}=a(1-x^{(n)})y^{(n)}\geq0,\\
    y^{(n+1)}-y^{(n)}=(1-x^{(n)})y^{(n)}(b-1)\leq0.
    \end{array}\right.
\end{equation}
Thus $x^{(n)}$ is a non-decreasing sequence, which bounded from above by 1 and $y^{(n)}$ sequence is non-increasing and with lower bound 0. Consequently, $x^{(n)}$ and $y^{(n)}$ has a limit say $x^*(\leq1)$, $y^*(\geq0)$, respectively.

From Lemma \ref{lem1}, we have
\begin{equation}\label{Mc+1}
\frac{x^{(n)}}{a}+\frac{y^{(n)}}{1-b}=\frac{x^{(0)}}{a}+\frac{y^{(0)}}{1-b}=c.
\end{equation}
By (\ref{wn+1}) and (\ref{Mc+1}) we have
\begin{equation}\label{x*y*}
\left\{\begin{array}{ll}
    a(1-x^*)y^*=0,\\
    (b-1)(1-x^*)y^*=0.\\
    \frac{x^*}{a}+\frac{y^*}{1-b}=c
    \end{array}\right.
\end{equation}

The limit of the sequence $x^{(n)}$ is equal to either $x^*<1$ or $x^*=1$. First, consider the case  $x^*<1$.   The first and second equations in (\ref{x*y*}) derive $y^*=0$. By substituting this value to the third equation in (\ref{x*y*}) we obtain $x^*=ac<1$. Now consider the case  $x^*=1$.  One substitutes $x^*=1$ into the third equation in (\ref{x*y*}) and gets $y^*=\frac{(ac-1)(1-b)}{a}$.
\end{proof}

The following theorem  describes the trajectory of any point $z$ in $S^1\times S^1$.
\begin{thm}\label{v1set} For the operator $V$ given by (\ref{xyuvkso}) (i.e. under condition $a,b\in(0,1)$) and for any initial point $z=\left(x_1^{(0)},x_2^{(0)}; y_1^{(0)},y_2^{(0)}\right)\in S^1\times S^1\setminus Fix(V)$ the following hold
$$\lim_{n\to \infty}V^{(n)}\left(z\right)=\left\{\begin{array}{ll}
z^*_1=\left(ac,1-ac;0,1\right), &  \mbox{if} \ \ ac<1,\\[2mm]
z^*_2=\left(1,0;\frac{(ac-1)(1-b)}{a},1-\frac{(ac-1)(1-b)}{a}\right), &  \mbox{if} \ \ ac\geq1,\\
\end{array}\right.$$
where $z^*_i\in \mathcal{Z}_i, \ i=1, 2$; $\mathcal{Z}_i$ are defined in (\ref{v1v2}) and $c=x_1^{(0)}/a+y_1^{(0)}/(1-b).$
\end{thm}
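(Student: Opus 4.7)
The plan is to reduce the four-dimensional dynamics of $V$ on $S^1\times S^1$ to the two-dimensional dynamics of $W$ on $S^1_0=[0,1]^2$ already analyzed in Theorem \ref{Wlimit}. The simplex constraints $x_1+x_2=1$ and $y_1+y_2=1$ make the full state $(x_1,x_2;y_1,y_2)$ a function of the pair $(x_1,y_1)$, so the evolution of $(x_1,y_1)$ under $V$ is exactly the evolution of $(x,y)$ under $W$ defined in (\ref{wkso}). This reduction is precisely the derivation that produced (\ref{wkso}) from (\ref{xyuvkso}) and was carried out in the preceding section.

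First I would verify the fixed-point correspondence: by comparing Propositions \ref{wfix} and \ref{vfix}, the map $\Phi:(x_1,y_1)\mapsto(x_1,1-x_1;y_1,1-y_1)$ is a homeomorphism $S^1_0\to S^1\times S^1$ that carries $Z_i$ bijectively onto $\mathcal{Z}_i$, hence $Fix(W)$ onto $Fix(V)$. Consequently an initial point $z=(x_1^{(0)},x_2^{(0)};y_1^{(0)},y_2^{(0)})\in S^1\times S^1\setminus Fix(V)$ corresponds to $(x^{(0)},y^{(0)})=(x_1^{(0)},y_1^{(0)})\in S^1_0\setminus Fix(W)$, and by construction $V^{(n)}(z)=\Phi\bigl(W^n(x^{(0)},y^{(0)})\bigr)$ for every $n\geq 0$.

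Next I would apply Theorem \ref{Wlimit} with the invariant quantity $c=x_1^{(0)}/a+y_1^{(0)}/(1-b)$, giving $\lim_{n\to\infty}W^n(x^{(0)},y^{(0)})=(ac,0)$ when $ac<1$ and $\lim_{n\to\infty}W^n(x^{(0)},y^{(0)})=\bigl(1,\tfrac{(ac-1)(1-b)}{a}\bigr)$ when $ac\geq 1$. Since $\Phi$ is continuous, applying $\Phi$ to these limits yields
\[
\lim_{n\to\infty}V^{(n)}(z)=\Phi\bigl(\lim_{n\to\infty}W^n(x^{(0)},y^{(0)})\bigr),
\]
which is exactly $z^*_1=(ac,1-ac;0,1)\in\mathcal{Z}_1$ or $z^*_2=\bigl(1,0;\tfrac{(ac-1)(1-b)}{a},1-\tfrac{(ac-1)(1-b)}{a}\bigr)\in\mathcal{Z}_2$ in the two respective cases.

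There is essentially no obstacle here, as the theorem is a direct pushforward of Theorem \ref{Wlimit}; the only small check worth flagging is that in the case $ac\geq 1$ the second coordinate of the $W$-limit indeed lies in $[0,1]$, which follows from $c\leq 1/a+1/(1-b)$ on $S^1_0$, so that $(ac-1)(1-b)/a\leq 1$, ensuring that $z^*_2$ lies in $\mathcal{Z}_2\subset S^1\times S^1$.
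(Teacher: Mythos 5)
Your proposal is correct and follows exactly the route the paper takes: the paper's entire proof is the one-line remark that the theorem is ``easily deduced from Theorem \ref{Wlimit}'', and your argument simply makes explicit the conjugacy $V^{(n)}(z)=\Phi\bigl(W^n(x_1^{(0)},y_1^{(0)})\bigr)$ via $\Phi:(x,y)\mapsto(x,1-x;y,1-y)$ together with continuity of $\Phi$. The extra check that $(ac-1)(1-b)/a\in[0,1]$ is a worthwhile detail the paper leaves implicit.
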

\begin{proof}
Easily deduced from Theorem \ref{Wlimit}.
\end{proof}

\section{\bf{An QSOBP on $S^3\times S^3$}}
Let graph $G$ consists one edge and three vertices. Moreover, let $\Phi=\{1, 2\}.$\\
$\Omega=\{\sigma_1, \sigma_2, \sigma_3, \sigma_4, \sigma_5, \sigma_6, \sigma_7, \sigma_8\}$ is the set of all cells, where
\begin{center}$\begin{array}{ll}\sigma_1=\{1-1,1\}, \sigma_2=\{1-2,1\}, \sigma_3=\{2-1,2\}, \sigma_4=\{2-2,2\},\\[2mm]
\sigma_5=\{1-1,2\}, \sigma_6=\{1-2,2\}, \sigma_7=\{2-1,1\}, \sigma_8=\{2-2,1\}. \end{array}$\end{center}
$F=\{\sigma^f_i\}$,\ $i=1,2,3,4$ is the set of females, $M=\{\sigma^f_j\}$,\ $j=5,6,7,8$ is the set of males.
\begin{equation}\label{exam2}
\begin{array}{ll}
    \Omega^f(G,\sigma^f_i,\sigma^m_j)=\left\{
                                            \begin{array}{ll}
                                              \{\sigma^f_1, \sigma^f_2\}, & \hbox{if \ \ $i=1$, $j=6,$\ or \ $i=2$, $j=5,$} \\
                                              \{\sigma^f_3, \sigma^f_4\}, & \hbox{if \ \ $i=3$, $j=8,$\ or \ $i=4$, $j=7,$} \\
                                              \{\sigma^f_i\}, & \hbox{otherwise,}
                                            \end{array}
                                          \right.\\[5mm]
    \Omega^m(G,\sigma^f_i,\sigma^m_j)=\left\{
                                            \begin{array}{ll}
                                              \{\sigma^m_5, \sigma^m_6\}, & \hbox{if \ \ $i=1$, $j=6,$\ or \ $i=2$, $j=5,$} \\
                                              \{\sigma^m_7, \sigma^m_8\}, & \hbox{if \ \ $i=3$, $j=8,$\ or \ $i=4$, $j=7,$} \\
                                              \{\sigma^m_j\}, & \hbox{otherwise.}

                                            \end{array}
                                          \right.
  \end{array}
\end{equation}
From (\ref{exam2}) we obtain hereditary coefficients as follows:
\begin{equation}\label{ehc2}
\begin{array}{ll}
    p^f_{\sigma^f_i\sigma^m_j,\sigma^f_k}=\left\{
                                            \begin{array}{ll}
                                              a, & \hbox{if \ \ $i=k=1, j=6,\ or\ i=2, j=5, k=1,$} \\
                                              b, & \hbox{if \ \ $i=k=3, j=8,\ or\ i=4, j=7, k=3,$} \\
                                             1-a, & \hbox{if \ \ $i=k=2, j=5,\ or\ i=1, j=6, k=2,$} \\
                                             1-b, & \hbox{if \ \ $i=k=4, j=7,\ or\ i=3, j=8, k=4,$} \\
                                             1, & \hbox{if \ \ $i=k=1, j=5,7,8,\ or\ i=k=2, j=6,7,8,$}\\
                                             &  \hbox{or\ \ $i=k=3, j=5,6,7,\ or \ i=k=4, j=5,6,8$} \\
                                             0, & \hbox{otherwise,}
                                            \end{array}
                                          \right.\\[5mm]
    p^m_{\sigma^f_i\sigma^m_j,\sigma^m_l}=\left\{
                                            \begin{array}{ll}
                                              c, & \hbox{if \ \ $j=l=5, i=2,\ or\ i=1, j=6, l=5,$} \\
                                              d, & \hbox{if \ \ $j=l=7, i=4,\ or\ i=3, j=8, l=7,$} \\
                                             1-c, & \hbox{if \ \ $j=l=6, i=1,\ or\ i=2, j=5, l=6,$} \\
                                             1-d, & \hbox{if \ \ $j=l=8, i=3,\ or\ i=4, j=7, l=8,$} \\
                                             1, & \hbox{if \ \ $j=l=5, i=1,3,4,\ or\ j=l=6, i=2,3,4,$}\\
                                             &  \hbox{or\ \ $j=l=7, i=1,2,3,\ or \ j=l=8, i=1,2,4$} \\
                                             0, & \hbox{otherwise,}
                                            \end{array}
                                          \right.
  \end{array}
\end{equation}
where $$a=\frac{\mu^f(\sigma^f_1)}{\mu^f(\sigma^f_1)+\mu^f(\sigma^f_2)},\ b=\frac{\mu^f(\sigma^f_3)}{\mu^f(\sigma^f_3)+\mu^f(\sigma^f_4)},\ c=\frac{\mu^m(\sigma^m_5)}{\mu^m(\sigma^m_5)+\mu^m(\sigma^m_6)},\ d=\frac{\mu^m(\sigma^m_7)}{\mu^m(\sigma^m_7)+\mu^m(\sigma^m_8)}.$$

If the coefficients of QSOBP (\ref{m.kso}) defined as (\ref{ehc2}), then we have $V:S^3\times S^3\rightarrow S^3\times S^3,$
\begin{equation}\label{exam2.kso}
V:\left\{\begin{array}{ll}
x'_1=x_1-(1-a)x_1y_2+ax_2y_1\\[2mm]
x'_2=x_2-ax_2y_1+(1-a)x_1y_2\\[2mm]
x'_3=x_3-(1-b)x_3y_4+bx_4y_3\\[2mm]
x'_4=x_4-bx_4y_3+(1-b)x_3y_4\\[2mm]
y'_1=y_1-(1-c)x_2y_1+cx_1y_2\\[2mm]
y'_2=y_2-cx_1y_2+(1-c)x_2y_1\\[2mm]
y'_3=y_3-(1-d)x_4y_3+dx_3y_4\\[2mm]
y'_4=y_4-dx_3y_4+(1-d)x_4y_3\\
 \end{array}\right.
\end{equation}
where $$x'_1=\lambda'(\sigma^f_1),\ x'_2=\lambda'(\sigma^f_2),\ x'_3=\lambda'(\sigma^f_3),\ x'_4=\lambda'(\sigma^f_4),$$
$$y'_1=\lambda'(\sigma^m_5),\ y'_2=\lambda'(\sigma^m_6),\ y'_3=\lambda'(\sigma^m_7),\ y'_4=\lambda'(\sigma^m_8).$$
For $a_0, c_0\in(0,1)$ we denote
\begin{equation}\label{a0c0}
I_{a_0c_0}=\{z\in S^3\times S^3: x_1+x_2=a_0, x_3+x_4=1-a_0, y_1+y_2=c_0, y_3+y_4=1-c_0\}.
\end{equation}
The following lemma is useful
\begin{lemma}\label{lemmaI}
For any fixed $a_0, c_0\in(0,1)$ we have
\begin{itemize}
  \item [(1)] for any $z\in S^3\times S^3$ the following holds
$$z'=V(z)\in I_{a_0c_0},\ i.e.,\  V(S^3\times S^3)\subset I_{a_0c_0};$$
  \item [(2)] the set $I_{a_0c_0}$ is invariant with respect to $V$, i.e., $V(I_{a_0c_0})\subset I_{a_0c_0}$.
\end{itemize}
\end{lemma}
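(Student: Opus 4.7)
The plan is to observe that both claims follow immediately from a single conservation property of the operator $V$ given in (\ref{exam2.kso}): namely, that the four partial sums
\[
x_1+x_2,\qquad x_3+x_4,\qquad y_1+y_2,\qquad y_3+y_4
\]
are preserved by $V$. The reason this is plausible just by looking at (\ref{exam2.kso}) is that each of these pairs of coordinates interacts only with itself: the "cross terms" $\pm(1-a)x_1y_2$ and $\pm a x_2 y_1$ appear with opposite signs in the equations for $x'_1$ and $x'_2$, and the analogous cancellation is visible in the other three pairs. So the verification of the lemma is a routine bookkeeping step rather than a genuinely nontrivial argument.

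Concretely, I would first add the equations for $x'_1$ and $x'_2$ in (\ref{exam2.kso}); the two cross-terms cancel, yielding $x'_1+x'_2=x_1+x_2$. Repeating this for the pairs $(x'_3,x'_4)$, $(y'_1,y'_2)$ and $(y'_3,y'_4)$ gives the remaining three identities
\[
x'_3+x'_4=x_3+x_4,\qquad y'_1+y'_2=y_1+y_2,\qquad y'_3+y'_4=y_3+y_4.
\]

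For part (2), this is essentially already done: if $z\in I_{a_0c_0}$, then $x_1+x_2=a_0$, $x_3+x_4=1-a_0$, $y_1+y_2=c_0$, $y_3+y_4=1-c_0$, and the four conservation identities above transfer these equalities to $z'=V(z)$, so $z'\in I_{a_0c_0}$. For part (1), given any $z\in S^3\times S^3$ one sets $a_0:=x_1+x_2$ and $c_0:=y_1+y_2$; because $z\in S^3\times S^3$ we automatically have $x_3+x_4=1-a_0$ and $y_3+y_4=1-c_0$, so $z\in I_{a_0c_0}$ and the conservation laws again place $z'$ in the same slice. (Strictly speaking the assertion $V(S^3\times S^3)\subset I_{a_0c_0}$ as written only makes sense when $a_0,c_0$ are chosen according to $z$; this is the content of the decomposition $S^3\times S^3=\bigcup_{a_0,c_0}I_{a_0c_0}$ into $V$-invariant leaves.)

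I do not anticipate any real obstacle: the argument is a direct algebraic cancellation in (\ref{exam2.kso}), and the only thing to be careful about is making sure the indexing of the "cross" summands matches between the paired equations. Once the four conservation laws are written down, both statements of the lemma drop out in one line each.
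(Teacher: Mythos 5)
Your proposal is correct and matches the paper's own proof, which likewise rests on the four conservation identities $x'_1+x'_2=x_1+x_2$, $x'_3+x'_4=x_3+x_4$, $y'_1+y'_2=y_1+y_2$, $y'_3+y'_4=y_3+y_4$ obtained by the same cancellation of cross terms in (\ref{exam2.kso}). Your parenthetical remark that part (1) only makes sense when $a_0,c_0$ are chosen according to $z$ is a fair and correct reading of the statement, which the paper leaves implicit.
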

\begin{proof} The proof follows from the following easily checked equality
$$x'_1+x'_2=x_1+x_2,\ x'_3+x'_4=x_3+x_4,\ y'_1+y'_2=y_1+y_2,\ y'_3+y'_4=y_3+y_4.$$
\end{proof}
For each fixed $a_0, c_0\in(0,1)$, by this Lemma \ref{lemmaI}, the investigation of the sequence $z^{(t)}=V(z^{(t-1)}),\  t=1,2,...,$ for each point $z^{(0)}\in S^3\times S^3$ is reduced to the case $z^{(0)}\in I_{a_0c_0}$. Therefore we are interested to the following dynamical system:
$$z^{(0)}\in I_{a_0c_0},\ z^{(1)}=V(z^{(0)}),\ z^{(2)}=V(z^{(1)}),...$$
the main problem is to study the limit
$$\lim\limits_{m\rightarrow\infty}z^{(m)}=\lim\limits_{m\rightarrow\infty}V^m(z^{(0)}).$$
The restriction on $I_{a_0c_0}$ of the operator $V$, denoted simply by $W_0$, has the form (we denote $x_1=x,\ y_1=y,\ x_3=u,\ y_3=v$)
\begin{equation}\label{w0kso}
W_0:\left\{\begin{array}{ll}
    x'=x-(1-a)x(c_0-y)+a(a_0-x)y,\\
    u'=u-(1-b)u(1-c_0-v)+b(1-a_0-u)v,\\
    y'=y-(1-c)(a_0-x)y+cx(c_0-y),\\
    v'=v-(1-d)(1-a_0-u)v+du(1-c_0-v),
    \end{array}\right.
\end{equation}
where
\begin{equation}\label{parshart}
a,b,c,d\in(0,1),\ a_0,c_0\in(0,1).
\end{equation}
Thus we get a quadratic operator with six independent parameters.

It can be seen from (\ref{w0kso}) that the variables in the first and third equations are not depended on the ones in the second and fourth equations. Thus, the study of the dynamics of the operator $W_0$ leads to the study of the dynamics of the operators $W_{1}$ (the first and third equations) and $W_{2}$ (the second and fourth equations).

If we suppose
\begin{equation}\label{x-u}
x\leftrightarrow v,\ y\leftrightarrow v, a\leftrightarrow b,\ c\leftrightarrow d, a_0\leftrightarrow 1-a_0,\ c\leftrightarrow 1-c_0
\end{equation}
then the operators $W_{1}$ and $W_{2}$ are exactly the same. Therefore, by studying the dynamics of operator $W_{1}$, it is possible to analyze  the dynamics of operators $W_2$, $W_0$ and $V$.

Consider the operator $W_{1}:S_{1}\rightarrow S_{1},$ $S_{1}=\{(x,y)\in\mathbb{R}^2: x\in(0,a_0), y\in(0,c_0)\},$
\begin{equation}\label{wxykso}
W_{1}:\begin{array}{ll}
    x'=x-(1-a)x(c_0-y)+a(a_0-x)y,\\
    y'=y-(1-c)(a_0-x)y+cx(c_0-y).
    \end{array}
\end{equation}
\subsection{Fixed points of (\ref{wxykso})}\

Let us first find the fixed points of the operator $W_{1}.$
To find fixed points of $W_{1}$ we should solve the following
\begin{equation}\label{wxyfix}
\begin{array}{ll}
    (1-a)x(c_0-y)=a(a_0-x)y,\\
    (1-c)(a_0-x)y=cx(c_0-y).
    \end{array}
\end{equation}
By (\ref{wxyfix}), we have
$$(a_0-x)y=\frac{1-a}{a}x(c_0-y)=\frac{c}{1-c}x(c_0-y).$$

If $a+c\neq1$, then $x(c_0-y)=0$ i.e. $x=0$ or $y=c_0.$ In this case, the system (\ref{wxyfix} )  have two solutions $(x,y)=(0,0)$ and $(x,y)=(a_0,c_0)$.
If $a+c=1$, then  the system (\ref{wxyfix})  has infinitely many solutions of the form $(x, \frac{cc_0x}{aa_0+(c-a)x})$.

To sum up,  we get the following  (use (\ref{x-u}) to find the fixed points of $W_{2}$)
\begin{pro}\label{xyuvfix} The sets $Fix(W_{1}), Fix(W_{2})$ has the following form
\begin{equation}\label{xyfix}
\begin{array}{ll}
    Fix(W_{1})=\left\{
                                            \begin{array}{ll}
                                              (0,0)\ or\ (a_0,c_0), & \hbox{if \ $a+c\neq1$}\\
                                              (x, y), & \hbox{if \ $a+c=1$}\end{array}\right. \end{array}
\end{equation}
\begin{equation}\label{uvfix}
\begin{array}{ll}
    Fix(W_{2})=\left\{
                                            \begin{array}{ll}
                                              (0,0)\ or\ (1-a_0,1-c_0), & \hbox{if \ $b+d\neq1$}\\
                                              (u, v), & \hbox{if \ $b+d=1$}\end{array}\right. \end{array}
\end{equation}
where   $$y=\frac{cc_0x}{aa_0+(c-a)x}, v=\frac{d(1-c_0)u}{b(1-a_0)+(d-b)u.}$$
\end{pro}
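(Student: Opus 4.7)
The plan is to solve the fixed-point system directly. Setting $W_1(x,y)=(x,y)$ reduces to
\begin{equation*}
(1-a)x(c_0-y)=a(a_0-x)y, \qquad (1-c)(a_0-x)y=cx(c_0-y),
\end{equation*}
as displayed in (\ref{wxyfix}). First I would isolate the common quantity $(a_0-x)y$ from each equation. The first gives $(a_0-x)y=\frac{1-a}{a}\,x(c_0-y)$ and the second $(a_0-x)y=\frac{c}{1-c}\,x(c_0-y)$. Subtracting yields
\begin{equation*}
\left(\frac{1-a}{a}-\frac{c}{1-c}\right)x(c_0-y)=0,
\end{equation*}
and a direct computation shows $\frac{1-a}{a}-\frac{c}{1-c}=\frac{1-a-c}{a(1-c)}$. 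This factorization is the heart of the argument: it cleanly separates the generic regime from the degenerate one.

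Next I would do a case split. If $a+c\neq 1$, then the bracket is nonzero, forcing $x(c_0-y)=0$. Substituting $x=0$ into the first equation gives $aa_0y=0$, hence $y=0$; substituting $y=c_0$ into the second forces $(1-c)(a_0-x)c_0=0$, hence $x=a_0$. Therefore the only fixed points are $(0,0)$ and $(a_0,c_0)$, matching (\ref{xyfix}). If $a+c=1$, the two equations collapse to one; solving $(1-a)x(c_0-y)=a(a_0-x)y$ for $y$ in terms of $x$ and using $c=1-a$ gives $y=\frac{cc_0x}{aa_0+(c-a)x}$, which describes the entire curve of fixed points claimed in the proposition.

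Finally, for $W_2$ I would not redo any algebra: applying the formal substitution (\ref{x-u}) exchanges $W_1$ with $W_2$ and sends the parameters $(a,c,a_0,c_0)$ to $(b,d,1-a_0,1-c_0)$, so (\ref{uvfix}) follows from (\ref{xyfix}) by inspection.

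The calculation itself is routine; the only real care points are verifying that $x(c_0-y)=0$ truly admits no solutions other than the two corners (one must check both branches $x=0$ and $y=c_0$ independently and confirm each forces the other coordinate to be pinned to a corner) and, in the case $a+c=1$, noting that the denominator $aa_0+(c-a)x$ does not vanish on the admissible range of $x$ so that the formula $y=\frac{cc_0x}{aa_0+(c-a)x}$ is well-defined. Both checks are essentially algebraic bookkeeping, and no deeper obstacle appears.
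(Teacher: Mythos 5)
Your proposal is correct and follows essentially the same route as the paper: both equate the two expressions for $(a_0-x)y$ obtained from the fixed-point system, observe that the resulting factor $\frac{1-a}{a}-\frac{c}{1-c}=\frac{1-a-c}{a(1-c)}$ vanishes exactly when $a+c=1$, split into the cases $x(c_0-y)=0$ versus the curve $y=\frac{cc_0x}{aa_0+(c-a)x}$, and obtain $Fix(W_2)$ by the symmetry (\ref{x-u}). Your write-up is slightly more explicit than the paper's (checking both branches $x=0$ and $y=c_0$ and the nonvanishing of the denominator), but there is no substantive difference.
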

From Proposition \ref{xyuvfix} and (\ref{a0c0}) we obtain the following on fixed points of the operator (\ref{exam2.kso}).
\begin{pro}\label{pro5} The fixed points of the operator $V$ defined by equality (\ref{exam2.kso}) are as follows:
\begin{equation}
\begin{array}{ll}
    Fix(V)=\left\{
                                            \begin{array}{ll}
                                              (0,a_0,0,1-a_0;0,c_0,0,1-c_0) \ or \\ (a_0,0,1-a_0,0;c_0,0,1-c_0,0), & \hbox{if \ $a+c\neq1$, $b+d\neq1$}\\
                                              (a_0,0,1-a_0,0;c_0,0,1-c_0,0) \ or \\ (x_1,a_0-x_1,0,1-a_0;y_1,c_0-y_1,0,1-c_0), & \hbox{if \ $a+c=1$, $b+d\neq1$}\\
                                              (0,a_0,x_3,1-a_0-x_3;0,c_0,y_3,1-c_0-y_3) \ or \\ (a_0,0,x_3,1-a_0-x_3;c_0,0,y_3,1-c_0-y_3), & \hbox{if \ $a+c\neq1$, $b+d=1$}\\
                                              (x_1,a_0-x_1,,x_3,1-a_0-x_3;y_1,c_0-y_1,y_3,1-c_0-y_3), & \hbox{if \ $a+c=1$, $b+d=1$}\\
\end{array}\right. \end{array}
\end{equation}
where   $$y_1=\frac{cc_0x_1}{aa_0+(c-a)x_1}, y_3=\frac{d(1-c_0)x_3}{b(1-a_0)+(d-b)x_3.}$$
\end{pro}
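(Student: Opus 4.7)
The plan is to reduce the fixed-point problem for $V$ to the already solved fixed-point problems for the two-dimensional operators $W_1$ and $W_2$, using the slice decomposition of Lemma~\ref{lemmaI}.

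First I would observe that any fixed point $z = (x_1,x_2,x_3,x_4;y_1,y_2,y_3,y_4)$ of $V$ must sit inside an invariant slice $I_{a_0c_0}$ whose parameters are forced by $z$ itself: one simply sets $a_0 := x_1+x_2$ and $c_0 := y_1+y_2$. Along this slice the four identities $x_2 = a_0 - x_1$, $x_4 = 1-a_0-x_3$, $y_2 = c_0 - y_1$, $y_4 = 1-c_0-y_3$ let me eliminate the even-indexed coordinates, and what remains is exactly the operator $W_0$ of (\ref{w0kso}) in the variables $(x,y,u,v) = (x_1,y_1,x_3,y_3)$.

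Next I would exploit the decoupling remarked on right after (\ref{w0kso}): the first and third equations of $W_0$ depend only on the pair $(x,y)$, while the second and fourth depend only on $(u,v)$. Hence $W_0$ is literally a product $W_1 \times W_2$, and $z$ is fixed by $V$ if and only if $(x_1, y_1) \in Fix(W_1)$ and $(x_3, y_3) \in Fix(W_2)$.

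The final step is simply to cross-multiply the catalogues supplied by Proposition~\ref{xyuvfix}, separating the four regimes $a+c = 1$ vs.\ $a+c\ne 1$ and $b+d = 1$ vs.\ $b+d\ne 1$, and to reinstate the eliminated coordinates via the invariance relations; this produces precisely the tuples listed in the statement. The only step carrying any substantive content is the first one, namely checking that every fixed point of $V$ genuinely lies on some $I_{a_0c_0}$ and that passing from $V$ to $W_0 = W_1 \times W_2$ is an equivalence of fixed-point sets. Once that identification is in hand the remainder is bookkeeping, and I would not anticipate any serious obstacle beyond keeping the index juggling between the eight original coordinates and the four reduced ones straight.
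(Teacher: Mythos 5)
Your approach coincides with the paper's: the paper obtains Proposition \ref{pro5} exactly by combining the conservation laws of Lemma \ref{lemmaI} (which force $a_0=x_1+x_2$, $c_0=y_1+y_2$ and reduce $V$ on each slice $I_{a_0c_0}$ to the decoupled product $W_1\times W_2$) with the catalogue of $Fix(W_1)$ and $Fix(W_2)$ in Proposition \ref{xyuvfix}; no further detail is supplied there. One substantive caveat, though: your final claim that the cross-multiplication ``produces precisely the tuples listed in the statement'' does not survive the bookkeeping. In the regime $a+c\neq1$, $b+d\neq1$ the product $Fix(W_1)\times Fix(W_2)$ has \emph{four} elements per slice, namely the two listed together with the mixed combinations $(0,a_0,1-a_0,0;0,c_0,1-c_0,0)$ and $(a_0,0,0,1-a_0;c_0,0,0,1-c_0)$, and analogous mixed families are missing in the two intermediate regimes. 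Your method is the correct one --- indeed it exposes an omission in the proposition as stated, which is corroborated by Theorem \ref{v2set}, where all four mixed points occur as limits of trajectories and hence must be fixed points --- but you should not assert that the output matches the listed set verbatim.
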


\subsection{Type of fixed points}\

Now we shall examine the type of the fixed points.
Before analyzing the fixed points we give the following useful lemma (\cite{ChX}).
\begin{lemma}\label{F(l)}
Let $F(\lambda)=\lambda^2+B\lambda+C,$ where $B$ and $C$ are two real constants. Suppose $\lambda_1$ and $\lambda_2$ are two roots of $F(\lambda)=0$. Then the following statements hold.
\begin{enumerate}
  \item[(i)] If $F(1)>0$ then
  \item[(i.1)] $|\lambda_1|<1$ and $|\lambda_2|<1$ if and only if $F(-1)>0$ and $C<1;$
  \item[(i.2)] $\lambda_1=-1$ and $\lambda_2\neq-1$ if and only if $F(-1)=0$ and $B\neq2;$
  \item[(i.3)] $|\lambda_1|<1$ and $|\lambda_2|>1$ if and only if $F(-1)<0;$
  \item[(i.4)] $|\lambda_1|>1$ and $|\lambda_2|>1$ if and only if $F(-1)>0$ and $C>1;$
  \item[(i.5)] $\lambda_1$ and $\lambda_2$ are a pair of conjugate complex roots and $|\lambda_1|=|\lambda_2|=1$ if only if $-2<B<2$ and $C=1;$
  \item[(i.6)] $\lambda_1=\lambda_2=-1$ if only if $F(-1)=0$ and $B=2.$
  \item[(ii)] If $F(1)=0,$ namely, 1 is one root of $F(\lambda)=0,$ then the other root $\lambda$ satisfies $|\lambda|=(<, >)1$  if and only if $|C|=(<, >)1.$
  \item[(iii)] If $F(1)<0$ then $F(\lambda)=0$ has one root lying in $(1;\infty).$ Moreover,
  \item[(iii.1)] the other root $\lambda$ satisfies $\lambda<(=)-1$ if and only if $F(-1)<(=)0;$
  \item[(iii.2)] the other root $\lambda$ satisfies $-1<\lambda<1$ if and only if $F(-1)>0.$
\end{enumerate}
\end{lemma}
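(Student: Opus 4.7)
The plan is to exploit the factorization $F(\lambda)=(\lambda-\lambda_1)(\lambda-\lambda_2)$ together with Vieta's formulas, which yield the three key identities
\[
F(1)=(1-\lambda_1)(1-\lambda_2),\qquad F(-1)=(1+\lambda_1)(1+\lambda_2),\qquad C=\lambda_1\lambda_2.
\]
These identities translate sign conditions on $F(\pm1)$ and comparisons of $C$ with $1$ directly into statements about the location of $\lambda_1,\lambda_2$ relative to $\pm1$ and to the unit circle. The first step is to split the analysis according to the sign of the discriminant $B^2-4C$, which decides whether the roots are real or a complex conjugate pair.

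In the complex-conjugate regime ($B^2<4C$), one has $\lambda_2=\overline{\lambda_1}$, whence $|\lambda_1|^2=|\lambda_2|^2=\lambda_1\overline{\lambda_1}=C$; moreover $F(1)=|1-\lambda_1|^2\ge0$ and $F(-1)=|1+\lambda_1|^2\ge0$ are automatic. Thus $|\lambda_i|<1\iff C<1$ and $|\lambda_i|=1\iff C=1$. The condition $-2<B<2$ combined with $C=1$ is precisely $B^2<4=4C$, so this regime is the entire content of (i.5), and it also feeds (i.1) or (i.4) according as $C<1$ or $C>1$.

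In the real-roots regime, $F(1)>0$ says that $\lambda_1,\lambda_2$ lie on the same side of $1$, and $F(-1)>0$ says they lie on the same side of $-1$. Combining these with the sign of $C-1$ pins down which of the intervals $(-\infty,-1)$, $(-1,1)$, $(1,\infty)$ contains each root; this yields (i.1), (i.3), (i.4) directly, and the boundary collapses (i.2) and (i.6) follow by reading off when a factor of $F(-1)$ vanishes and checking whether $-B=\lambda_1+\lambda_2$ equals $-2$. For (ii), $F(1)=0$ forces one root to equal $1$, while Vieta gives the other as $C$, so its modulus equals $|C|$. For (iii), $F(1)<0$ forces, by the intermediate value theorem applied to an upward-opening parabola, one root to lie in $(1,\infty)$; the sign or vanishing of $F(-1)$ then locates the other root relative to $-1$, which gives (iii.1) and (iii.2).

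The only genuine obstacle is bookkeeping: each individual sign check is trivial, but the statement contains many subcases. I would organize the write-up around a small table whose rows partition $\mathbb{R}$ by $\{-1,1\}$ and whose columns record the signs of $F(1)$, $F(-1)$, and $C-1$, so that every assertion in (i)--(iii) reduces to matching a pattern. No deeper estimate is required; the lemma is really a catalogue of sign configurations, and the proof amounts to verifying each row of the catalogue.
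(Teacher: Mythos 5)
The paper offers no proof of this lemma: it is quoted verbatim from \cite{ChX} and used as a black box, so there is nothing internal to compare against. Your argument is correct and self-contained, and it is the standard way to establish this catalogue. The three identities $F(1)=(1-\lambda_1)(1-\lambda_2)$, $F(-1)=(1+\lambda_1)(1+\lambda_2)$, $C=\lambda_1\lambda_2$ do reduce every item to a sign pattern, and the split on the discriminant correctly isolates the complex-conjugate regime where $|\lambda_1|^2=|\lambda_2|^2=C$ and $F(\pm1)>0$ automatically. Two points are worth making explicit when you write out the table. First, in the backward directions of (i.1) and (i.4) for real roots, the hypotheses $F(1)>0$ and $F(-1)>0$ leave three configurations (both roots in $(-\infty,-1)$, both in $(-1,1)$, both in $(1,\infty)$) --- the mixed configuration $\lambda_1<-1<1<\lambda_2$ is excluded precisely because it would force $F(1)<0$ --- and it is the comparison of $C=\lambda_1\lambda_2$ with $1$ that separates the middle configuration from the two outer ones. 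Second, in (iii) you should record that $F(1)<0$ already forces two distinct real roots straddling $1$, and that the factor $1+\lambda_{2}>2$ for the root $\lambda_2>1$ makes the sign of $F(-1)$ equal to the sign of $1+\lambda_1$, which is exactly (iii.1)--(iii.2). With those remarks the bookkeeping closes completely; your proof is a genuine addition relative to the paper, which merely cites the result.
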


\begin{pro}
For the fixed points of the operator (\ref{wxykso}), the followings hold true
$$(0,0) \ is\ \left\{\begin{array}{lll}
attractive, & \hbox{if \ $a+c<1$}\\
saddle, & \hbox{if \ $a+c>1$}\\
\end{array}\right. $$
$$(a_0,c_0) \ is\ \left\{\begin{array}{lll}
attractive, & \hbox{if \ $a+c>1$}\\
saddle, & \hbox{if \ $a+c<1$}\\
\end{array}\right. $$
$$\left(x, \frac{cc_0x}{aa_0+(c-a)x} \right)\  is\  non-hyperbolic,\  if\  a+c=1.$$
\end{pro}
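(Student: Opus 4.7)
The plan is to reduce each case to an application of Lemma~\ref{F(l)}. First I would compute
\[
J_{W_1}(x,y) = \begin{pmatrix} 1 - (1-a)c_0 + (1-2a)y & (1-2a)x + aa_0 \\ (1-2c)y + cc_0 & 1 - (1-c)a_0 + (1-2c)x \end{pmatrix},
\]
write the characteristic polynomial in the form $F(\lambda) = \lambda^2 - (\mathrm{tr}\,J)\,\lambda + \det J$, and evaluate $F(1)$, $F(-1)$, and $\det J$ at each of the two isolated fixed points. A direct calculation gives $F(1) = (1-a-c)\,a_0 c_0$ at $(0,0)$ and $F(1) = (a+c-1)\,a_0 c_0$ at $(a_0,c_0)$, so the sign of $F(1)$ flips across the curve $a+c=1$ and both points simultaneously acquire the eigenvalue $\lambda=1$ on that curve.

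The key algebraic work, which I expect to be the main (though routine) obstacle, is to show that $F(-1)>0$ at both fixed points for all admissible parameters and, in the attractivity regime, that $\det J<1$. At $(0,0)$ I would regroup the expansion to obtain
\[
F(-1) = (2-a_0)(2-c_0+ac_0) + ca_0(2-c_0), \qquad 1 - \det J = a_0 c_0 + (1-c)a_0(1-c_0) + (1-a)c_0(1-a_0),
\]
each a manifestly positive sum. The analogous computation at $(a_0,c_0)$ yields
\[
F(-1) = (2-ac_0)(2-ca_0) - (1-a)(1-c)\,a_0 c_0, \qquad 1 - \det J = ca_0(1-c_0) + ac_0(1-a_0) + a_0 c_0,
\]
and the first expression is positive because each factor of the leading product exceeds $1$ while $(1-a)(1-c)a_0 c_0<1$.

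With these inequalities in place, the classification drops out of Lemma~\ref{F(l)}. When $a+c<1$, at $(0,0)$ we have $F(1)>0$, $F(-1)>0$, $\det J<1$; case~(i.1) gives $|\lambda_1|,|\lambda_2|<1$, so $(0,0)$ is attractive. At $(a_0,c_0)$ the sign of $F(1)$ flips to $F(1)<0$ while $F(-1)>0$; case~(iii.2) then places one eigenvalue in $(1,\infty)$ and the other in $(-1,1)$, i.e.\ a saddle. The regime $a+c>1$ swaps the two verdicts by the identical argument.

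Finally, in the degenerate case $a+c=1$ the formula $y(x)=cc_0 x/(aa_0+(c-a)x)$ defines a smooth curve of fixed points containing the two previous ones. Differentiating the identity $W_1(x,y(x))=(x,y(x))$ in $x$ shows that the tangent vector $(1,y'(x))^{T}$ is an eigenvector of $J_{W_1}$ with eigenvalue $1$, so the Jacobian already carries an eigenvalue on the unit circle and each such fixed point is non-hyperbolic by Definition~\ref{d1}. The only real difficulty is thus algebraic, namely finding the clean regroupings that render $F(-1)$ and $1-\det J$ manifestly positive; the rest is case-by-case bookkeeping with Lemma~\ref{F(l)}.
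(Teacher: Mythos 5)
Your proposal is correct and takes essentially the same approach as the paper: compute $J_{W_1}$, form $F(\lambda)=\lambda^2+B\lambda+C$, verify $F(1)=(1-a-c)a_0c_0$ at $(0,0)$ and $-(1-a-c)a_0c_0$ at $(a_0,c_0)$ together with $F(-1)>0$ and $C<1$, and invoke items (i.1) and (iii.2) of Lemma~\ref{F(l)} (your explicit positive regroupings of $F(-1)$ and $1-\det J$ check out). The only minor deviation is the case $a+c=1$, where the paper reads off $F(1)=0$ directly from the general formula for $B$ and $C$ while you obtain the eigenvalue $1$ from the tangent vector to the curve of fixed points; both are one-line arguments yielding non-hyperbolicity.
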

\begin{proof}
To find the type of a fixed point $(x,y)$ of the operator (\ref{wxykso}) we write the Jacobi matrix:
$$J_{W_1}=\left(%
\begin{array}{cc}
  1-(1-a)c_0+(1-2a)y & aa_0+(1-2a)x \\
  cc_0+(1-2c)y & 1-(1-c)a_0+(1-2c)x \\
\end{array}%
\right).$$
The characteristic equation is
\begin{equation}
F(\lambda)=\lambda^2+B\lambda+C,
\end{equation}
where $$B=(1-a)c_0+(1-c)a_0-(1-2a)y-(1-2c)x-2,$$
$$C=(1-a-c)(a_0c_0-c_0x-a_0y)-(1-a)c_0-(1-c)a_0+(1-2a)y+(1-2c)x+1.$$

$\textbf{(1)}$ Let $x=0, y=0.$ Then $$F(1)=(1-a-c)a_0c_0,\ C=1-c_0(1-a(1-a_0))-a_0(1-c(1-c_0))<1,$$
$$F(-1)=3-(1-a)c_0-(1-c)a_0+(1-c_0)(1-a_0)+ac_0(1-a_0)+ca_0(1-c_0)>0.$$
If $a+c<1$, then $F(1)>0.$ According to item (i.1) of Lemma \ref{F(l)}, $|\lambda_{1,2}|<1.$ If $a+c>1$, then $F(1)<0.$ According to item (iii.2) of Lemma \ref{F(l)}, $\lambda_1>1,\ -1<\lambda<1.$

$\textbf{(2)} $ Let $x=a_0, y=c_0.$ Then
$$F(1)=-(1-a-c)a_0c_0,\ C=1-a_0c_0-ac_0(1-a_0)-a_0c(1-c_0)<1,$$
$$F(-1)=(1-c_0)(1-a_0)+ac_0(1-a_0)+ca_0(1-c_0)+2-ac_0-ca_0>0.$$
If $a+c>1$, then $F(1)>0.$ According to item (i.1) of Lemma \ref{F(l)}, $|\lambda_{1,2}|<1.$ If $a+c<1$, then $F(1)<0.$ According to item (iii.2) of Lemma \ref{F(l)}, $\lambda_1>1,\ -1<\lambda<1.$

$\textbf{(3)} $ Let $y=\frac{cc_0x}{aa_0+(c-a)x}.$ Then $F(1)=0.$ By item (ii) of Lemma \ref{F(l)}, at least one of the eigenvalues $\lambda_1$ and $\lambda_2$ is equal to one.
\end{proof}

\subsection{The set of limit points}\

Let $a+c\neq1.$

The following theorem  describes the trajectory of any point $(x^{(0)}, y^{(0)})$ in $S_{1}$.
\begin{thm}\label{wset} For the operator $W_{1}$ given by (\ref{wxykso}) (i.e. under condition (\ref{parshart})) and for any initial point $(x^{(0)}, y^{(0)})\in S_{1}\setminus Fix(W_{1})$ the following hold
$$\lim_{n\to \infty}x^{(n)}=\left\{\begin{array}{ll}
0, \ \ \ \ \ \mbox{if} \ \ a+c<1, \\[2mm]
a_0, \ \ \ \ \mbox{if} \ \ a+c>1,
\end{array}\right.$$
$$\lim_{n\to \infty}y^{(n)}=\left\{\begin{array}{ll}
0, \ \ \ \ \ \ \mbox{if} \ \ a+c<1, \\[2mm]
c_0, \ \ \ \ \ \mbox{if} \ \ a+c>1,
\end{array}\right.$$
where $(x^{(n)}, y^{(n)})=W_{1}^n(x^{(0)}, y^{(0)})$, with $W_{1}^n$ is $n$-th iteration of $W_{1}$.
\end{thm}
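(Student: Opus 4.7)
The plan is to find two linear Lyapunov-type quantities that are monotone under $W_1$, use them to force convergence of the trajectory, and then pick out the correct limit among the two fixed points identified in Proposition \ref{xyuvfix}.

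Set $A(x,y)=(a_0-x)y$ and $B(x,y)=x(c_0-y)$, and introduce
\[
L_1(x,y)=(1-c)x+ay,\qquad L_2(x,y)=cx+(1-a)y.
\]
A direct computation from (\ref{wxykso}), using $x'-x=aA-(1-a)B$ and $y'-y=-(1-c)A+cB$, yields the key identities
\[
L_1'-L_1=(a+c-1)\,B,\qquad L_2'-L_2=(a+c-1)\,A.
\]
These are exactly the two linear combinations that become invariants on the degenerate line $a+c=1$; away from it they provide strict monotonicity.

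Assume first $a+c<1$. Then $L_1^{(n)}$ and $L_2^{(n)}$ are non-increasing, bounded below by $0$, hence convergent. Telescoping,
\[
L_i^{(0)}-L_i^{(n)}=(1-a-c)\sum_{k=0}^{n-1} B^{(k)} \text{ (resp.\ } A^{(k)}\text{)},
\]
so $\sum A^{(k)},\sum B^{(k)}<\infty$, and in particular $A^{(n)},B^{(n)}\to 0$. The linear change of coordinates $(x,y)\mapsto(L_1,L_2)$ has determinant $(1-c)(1-a)-ac=1-a-c\neq 0$, so it is invertible; hence the convergence of $(L_1^{(n)},L_2^{(n)})$ forces the convergence of $(x^{(n)},y^{(n)})$ to some $(x^*,y^*)$. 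By continuity $A(x^*,y^*)=B(x^*,y^*)=0$, and since $a_0,c_0\in(0,1)$ the only solutions in $[0,a_0]\times[0,c_0]$ are $(0,0)$ and $(a_0,c_0)$. To rule out the latter, note that $L_1$ is strictly increasing in each argument (because $a>0$ and $1-c>0$), so its maximum on $[0,a_0]\times[0,c_0]$ is attained strictly at $(a_0,c_0)$; for any starting point in $S_1=(0,a_0)\times(0,c_0)$ we have $L_1^{(0)}<L_1(a_0,c_0)$, and then monotonicity gives $L_1^*\leq L_1^{(0)}<L_1(a_0,c_0)$, ruling out $(a_0,c_0)$. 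Therefore $(x^*,y^*)=(0,0)$.

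The case $a+c>1$ is symmetric: now $L_1^{(n)},L_2^{(n)}$ are non-decreasing and bounded above by $L_1(a_0,c_0),L_2(a_0,c_0)$, the same argument shows $(x^{(n)},y^{(n)})$ converges to a fixed point of $W_1$, and this time $L_1$ is strictly minimized at $(0,0)$, so $L_1^*\geq L_1^{(0)}>0=L_1(0,0)$ excludes $(0,0)$, leaving $(x^*,y^*)=(a_0,c_0)$. The main obstacle is really step one: guessing the two correct linear Lyapunov functions $L_1,L_2$; once they are in hand the remainder is a routine compactness/continuity argument exploiting the invertibility of the linear change of variables when $a+c\neq 1$.
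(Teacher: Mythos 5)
Your proof is correct and follows essentially the same strategy as the paper: the paper's monotone quantities $z^{(n)}=x^{(n)}+y^{(n)}$ and $z_0^{(n)}=x^{(n)}+\frac{1-a}{c}y^{(n)}$ are exactly $L_1+L_2$ and $\frac{1}{c}L_2$ in your notation, so both arguments rest on the same two-dimensional space of linear Lyapunov functions, monotone bounded convergence, and inversion of the (nondegenerate when $a+c\neq 1$) linear change of variables. The one place you go beyond the paper is the explicit exclusion of the wrong fixed point via $L_1^{(0)}<L_1(a_0,c_0)$ (resp.\ $L_1^{(0)}>L_1(0,0)$), a step the paper only gestures at with ``(see Figure 1)''; that addition is welcome.
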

\begin{proof}
Let $a+c<1.$ Then there exists $k>1$ such that $c\cdot k=1-a$.
Denote
$z^{(n)}=x^{(n)}+y^{(n)}$ and $z^{(n)}_{0}=x^{(n)}+k\cdot y^{(n)}$, where $x^{(n)}, y^{(n)}$ defined by the following
\begin{equation}\label{recc}\begin{array}{ll}
x^{(n)}=x^{(n-1)}-(1-a)x^{(n-1)}(c_0-y^{(n-1)})+a(a_0-x^{(n-1)})y^{(n-1)},\\[3mm]
 y^{(n)}=y^{(n-1)}-(1-c)(a_0-x^{(n-1)})y^{(n-1)}+cx^{(n-1)}(c_0-y^{(n-1)}).
 \end{array}\end{equation}
By (\ref{recc}) we have
\begin{equation}\begin{array}{ll}
z^{(n)}-z^{(n-1)}=(a+c-1)\left(x^{(n-1)}(c_0-y^{(n-1)})+y^{(n-1)}(a_0-x^{(n-1)})\right)\leq0,\\[3mm]
z_0^{(n)}-z_0^{(n-1)}=(1-k)y^{(n-1)}(a_0-x^{(n-1)})\leq0.
\end{array}\end{equation}
Both sequences $\{z^{(n)}\}$ and $\{z^{(n)}_{0}\}$ are monotone and bounded,
i.e.,
$$0\leq...\leq z^{(n)}\leq z^{(n-1)}\leq...\leq z^{(0)},$$
$$ 0\leq...\leq z^{(n)}_{0}\leq z^{(n-1)}_{0}\leq...\leq z^{(0)}_{0}.$$
Thus $\{z^{(n)}\}$ and $\{z^{(n)}_{0}\}$ have limit point, denote the limits by $z^*$ and $z^*_{0}$ respectively.
Consequently, the following limits exist
$$y^*=\lim_{n\to \infty}y^{(n)}=\frac{1}{1-k}\lim_{n\to \infty}(z^{(n)}-z^{(n)}_{0})=\frac{1}{1-k}(z^*-z^*_{0}),$$
$$x^*=\lim_{n\to \infty}x^{(n)}=z^*-y^*.$$
 and by (\ref{recc}) we have
$$(1-a)x^*(c_0-y^*)=a(a_0-x^*)y^*,\ \ \ (1-c)(a_0-x^*)y^*=cx^*(c_0-y^*),$$
i.e., $x^*=0, y^*=0$ (see Figure 1).

For $a+c>1,$ the proof of the theorem is similar as above (see Figure 2).
\end{proof}
\begin{center}\label{fig1}
   \begin {tikzpicture} [scale=0.60]
   %Coordinates
   \draw[->, thick] (0,0) -- (0,8);
   \draw[->, thick] (0,0) -- (11,0);

   \draw[thick] (0,7) --  (10, 7);
   \draw[thick] (10,0) --  (10, 7);
   \draw[->, thick, dashed] (9.9,6.9) --  (0.1, 0.1);
   \draw[->, thick,dashed] (6,4.2) --  (5.9, 4.1);
   \draw[->, thick,dashed] (2.2,1.54) --  (2.1, 1.44);
    \draw[->, thick,dashed] (4.1,2.87) --  (4, 2.77);
    \draw[->, thick,dashed] (8.2,5.74) --  (8.1, 5.64);
  \draw[->, thick,dashed] (0.5,6.5).. controls (2,5.5) and (3,4) ..   (2, 2);
  \draw[->, thick,dashed] (2.5,6.5).. controls (4,5.5) and (5,5) ..   (4, 3.5);
  \draw[->, thick,dashed] (5.5,6.5).. controls (5.8,4.8)   ..   (5, 4);
\draw[->, thick,dashed] (9.5, 0.5).. controls (8,5) and (6,4) ..   (5.5, 3.5);
\draw[->, thick,dashed] (7.5, 0.5).. controls (6,2.5) ..   (3.5, 2.2);
\draw[->, thick,dashed] (5.5, 0.5).. controls (3.5,1.5) ..   (2.5, 1.2);

\filldraw[red] (0,0) circle (2pt);
\filldraw[red] (10,7) circle (2pt);
    %nodes x axes
   \node[left] at (10.5,-0.5){$a_0$};
   \node[above] at (11,0){$x$};
    %nodes y axes
   \node[left] at (0,7){$c_0$};
   \node[right] at (0,8){$y$};
    \node[left] at (1,-0.5){$(0,0)$};
   \node[above] at (10,7){$(a_0,c_0)$};
\end{tikzpicture}
\begin {tikzpicture} [scale=0.60]
   %Coordinates
   \draw[->, thick] (0,0) -- (0,8);
   \draw[->, thick] (0,0) -- (11,0);

   \draw[thick] (0,7) --  (10, 7);
   \draw[thick] (10,0) --  (10, 7);
   \draw[->, thick, dashed] (0.1, 0.1) -- (9.9,6.9);
   \draw[->, thick,dashed] (5.85, 4.1) -- (6,4.2);
   \draw[->, thick,dashed] (2.05, 1.44) -- (2.2,1.54);
    \draw[->, thick,dashed] (3.95, 2.77) -- (4.1,2.87);
    \draw[->, thick,dashed] (8.1, 5.64) --  (8.2,5.74);

\filldraw[red] (0,0) circle (2pt);
\filldraw[red] (10,7) circle (2pt);

  \draw[->, thick,dashed] (0.5,5.5).. controls (2.5,3.5) and (3,4) ..   (4, 4);
  \draw[->, thick,dashed] (2.5,6.5).. controls (4,5.5) and (5,5) ..   (6, 5);
  \draw[->, thick,dashed] (5.5,6.5).. controls (6.8,5.8)   ..   (8, 6);

\draw[->, thick,dashed] (9.5, 0.5).. controls (7,3)  ..   (8.5, 5.5);
\draw[->, thick,dashed] (6.5, 0.5).. controls (5.5,2.3) ..   (7, 4.2);
\draw[->, thick,dashed] (3.5, 0.5).. controls (3.5,1.5) ..   (4.5, 2.6);
    %nodes x axes
   \node[left] at (10.5,-0.5){$a_0$};
   \node[above] at (11,0){$x$};
    %nodes y axes
   \node[left] at (0,7){$c_0$};
   \node[right] at (0,8){$y$};
    \node[left] at (1,-0.5){$(0,0)$};
   \node[above] at (10,7){$(a_0,c_0)$};
\end{tikzpicture}

{Figure 1.} $a+c<1.$ \ \ \ \ \ \ \ \ \ \ \ \ \ \ \ \ \ \ \ \ \ \ \ \ \ \  {Figure 2.}\label{fig2} $a+c>1.$
\end{center}

Using Theorem \ref{wset}, the following theorem  describes the trajectory of any point $(u^{(0)}, v^{(0)})$ in $S_{2}=\{(u,v)\in\mathbb{R}^2: u\in[0,1-a_0], v\in[0,1-c_0]\}$.
\begin{thm}\label{wuvset} For the operator $W_{2}$ (i.e. under condition (\ref{parshart})) and for any initial point $(u^{(0)}, v^{(0)})\in S_{2}\setminus Fix(W_{2})$ the following hold
$$\lim_{n\to \infty}u^{(n)}=\left\{\begin{array}{ll}
0, \ \ \ \ \ \mbox{if} \ \ b+d<1, \\[2mm]
1-a_0, \ \ \ \ \mbox{if} \ \ b+d>1,
\end{array}\right.$$
$$\lim_{n\to \infty}v^{(n)}=\left\{\begin{array}{ll}
0, \ \ \ \ \ \ \mbox{if} \ \ b+d<1, \\[2mm]
1-c_0, \ \ \ \ \ \mbox{if} \ \ b+d>1,
\end{array}\right.$$
where $(u^{(n)}, v^{(n)})=W_2^n(u^{(0)}, v^{(0)})$, with $W_2^n$ is $n$-th iteration of $W_2$.
\end{thm}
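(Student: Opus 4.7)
The plan is to reduce Theorem \ref{wuvset} to Theorem \ref{wset} via the symmetry transformation (\ref{x-u}) already announced in the paper. Writing out the $u,v$–equations from (\ref{w0kso}),
\begin{align*}
u' &= u-(1-b)u(1-c_0-v)+b(1-a_0-u)v,\\
v' &= v-(1-d)(1-a_0-u)v+du(1-c_0-v),
\end{align*}
and comparing with (\ref{wxykso}), I see that $W_2$ is obtained from $W_1$ by the substitution
$$x\leftrightarrow u,\ y\leftrightarrow v,\ a\leftrightarrow b,\ c\leftrightarrow d,\ a_0\leftrightarrow 1-a_0,\ c_0\leftrightarrow 1-c_0.$$
Under this relabeling, the rectangular domain $S_2=[0,1-a_0]\times[0,1-c_0]$ for $W_2$ plays the role of $S_1=[0,a_0]\times[0,c_0]$ for $W_1$, and the fixed-point set $Fix(W_2)$ given by (\ref{uvfix}) is exactly the image of $Fix(W_1)$ under the relabeling.

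The first step is therefore to verify that the relabeling defines a genuine conjugacy $\Phi:S_2\to S_1'$, where $S_1'$ is the $S_1$–domain corresponding to the new parameters $(b,d,1-a_0,1-c_0)$, and that $\Phi\circ W_2=W_1'\circ\Phi$, where $W_1'$ denotes the operator (\ref{wxykso}) with those new parameter values. Since the equations match term by term after relabeling, this is a direct check.

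Once the conjugacy is in place, the convergence statements of Theorem \ref{wset} applied to $W_1'$ translate back immediately: for $(u^{(0)},v^{(0)})\in S_2\setminus Fix(W_2)$, the trajectory $(u^{(n)},v^{(n)})=W_2^n(u^{(0)},v^{(0)})$ converges to $(0,0)$ when $b+d<1$ and to $(1-a_0,1-c_0)$ when $b+d>1$, because the analogous condition for $W_1'$ reads ``$a+c<1$'' with $(a,c)$ replaced by $(b,d)$.

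No genuine obstacle arises; the only thing to be careful about is that the symmetry in (\ref{x-u}) is stated with a small typographical imprecision (``$x\leftrightarrow v,\ y\leftrightarrow v$'' and ``$c\leftrightarrow 1-c_0$''), so one should spell out the correct substitution $(x,y,a,c,a_0,c_0)\mapsto(u,v,b,d,1-a_0,1-c_0)$ at the start of the proof. After that, the argument is a one-line appeal to Theorem \ref{wset}.

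\begin{proof}
By inspection of (\ref{w0kso}), the operator $W_2$ has the same functional form as $W_1$ in (\ref{wxykso}) after the substitution $x\mapsto u$, $y\mapsto v$, $a\mapsto b$, $c\mapsto d$, $a_0\mapsto 1-a_0$, $c_0\mapsto 1-c_0$. Under this substitution $S_1$ is transformed into $S_2$ and $Fix(W_1)$ into $Fix(W_2)$. Applying Theorem \ref{wset} to the so-obtained operator yields the claimed limits.
\end{proof}
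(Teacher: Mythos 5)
Your proposal is correct and follows exactly the route the paper intends: the paper states Theorem \ref{wuvset} without a separate proof, relying on the symmetry announced in (\ref{x-u}) to transfer Theorem \ref{wset} from $W_1$ to $W_2$, which is precisely your substitution $(x,y,a,c,a_0,c_0)\mapsto(u,v,b,d,1-a_0,1-c_0)$. Your explicit correction of the typographical slip in (\ref{x-u}) and the term-by-term verification are a welcome tightening, but the argument is the same.
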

The following theorem  describes the trajectory of any point $z$ in $I_{a_0c_0}$.
\begin{thm}\label{v2set} For the operator $V$ given by (\ref{exam2.kso}) (i.e. under condition (\ref{parshart})) and for any initial point $z=\left(x_1^{(0)},x_2^{(0)},x_3^{(0)},x_4^{(0)}; y_1^{(0)},y_2^{(0)},y_3^{(0)},y_4^{(0)}\right)\in I_{a_0c_0}\setminus Fix(V)$ the following hold
$$\lim_{n\to \infty}V^{(n)}\left(z\right)=\left\{\begin{array}{ll}
(0,a_0,0,1-a_0;0,c_0,0,1-c_0) \ \ \ \ \ \mbox{if} \ \ a+c<1,\  b+d<1, \\[2mm]
(0,a_0,1-a_0,0;0,c_0,1-c_0,0) \ \ \ \ \ \mbox{if} \ \ a+c<1,\  b+d>1, \\[2mm]
(a_0,0,0,1-a_0;c_0,0,0,1-c_0) \ \ \ \ \ \mbox{if} \ \ a+c>1,\  b+d<1, \\[2mm]
(a_0,0,1-a_0,0;c_0,0,1-c_0,0) \ \ \ \ \ \mbox{if} \ \ a+c>1,\  b+d>1. \\
\end{array}\right.$$
\end{thm}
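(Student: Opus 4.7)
The plan is to reduce the statement to the two already-proved one-dimensional theorems by exploiting the block structure of $V$ on the invariant slice $I_{a_0c_0}$.

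First I would fix an initial point $z^{(0)}=(x_1^{(0)},x_2^{(0)},x_3^{(0)},x_4^{(0)};y_1^{(0)},y_2^{(0)},y_3^{(0)},y_4^{(0)})\in S^3\times S^3\setminus Fix(V)$ and, setting $a_0=x_1^{(0)}+x_2^{(0)}$ and $c_0=y_1^{(0)}+y_2^{(0)}$, invoke Lemma \ref{lemmaI} to conclude that the entire orbit lies in $I_{a_0c_0}$ and, on this set, $V$ acts as $W_0$ from (\ref{w0kso}). Because the first and third components of $W_0$ involve only $x=x_1$ and $y=y_1$ (and the parameters $a,c,a_0,c_0$), while the second and fourth components involve only $u=x_3$ and $v=y_3$ (and the parameters $b,d,a_0,c_0$), the dynamics splits into the two independent subsystems $W_1$ and $W_2$, which I would record explicitly so that the coordinates $x_2, x_4, y_2, y_4$ are automatically recovered via $x_2=a_0-x_1$, $x_4=1-a_0-x_3$, $y_2=c_0-y_1$, $y_4=1-c_0-y_3$.

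Next, I would apply Theorem \ref{wset} to the orbit $(x_1^{(n)},y_1^{(n)})=W_1^n(x_1^{(0)},y_1^{(0)})$ and Theorem \ref{wuvset} to the orbit $(x_3^{(n)},y_3^{(n)})=W_2^n(x_3^{(0)},y_3^{(0)})$, obtaining
\[
\lim_{n\to\infty}(x_1^{(n)},y_1^{(n)})=\begin{cases}(0,0), & a+c<1,\\ (a_0,c_0), & a+c>1,\end{cases}\qquad
\lim_{n\to\infty}(x_3^{(n)},y_3^{(n)})=\begin{cases}(0,0), & b+d<1,\\ (1-a_0,1-c_0), & b+d>1.\end{cases}
\]
Substituting into the relations $x_2^{(n)}=a_0-x_1^{(n)}$, $x_4^{(n)}=1-a_0-x_3^{(n)}$, $y_2^{(n)}=c_0-y_1^{(n)}$, $y_4^{(n)}=1-c_0-y_3^{(n)}$ and taking the four combinations of sign conditions would yield precisely the four limit vectors listed in the statement.

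I would handle one case in detail and indicate that the remaining three are symmetric. A small bookkeeping point to mention is the assumption $z^{(0)}\notin Fix(V)$: by Proposition \ref{pro5} this does not by itself force both projections $(x_1^{(0)},y_1^{(0)})\notin Fix(W_1)$ and $(x_3^{(0)},y_3^{(0)})\notin Fix(W_2)$, but if one of the projections already sits at a fixed point of its subsystem then it stays there and the corresponding block converges trivially, so the conclusion is unaffected. This degenerate check is the only real obstacle, and it is minor; the rest is a direct application of Lemma \ref{lemmaI}, Theorem \ref{wset} and Theorem \ref{wuvset}.
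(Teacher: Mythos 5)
Your proposal is correct and follows essentially the same route as the paper, whose entire proof of Theorem \ref{v2set} is the one-line remark that it is ``easily deduced from Theorem \ref{wset} and Theorem \ref{wuvset}''; you simply supply the details (Lemma \ref{lemmaI}, the splitting of $W_0$ into the independent blocks $W_1$ and $W_2$, and the recovery of $x_2,x_4,y_2,y_4$ from the conserved sums). Your closing remark on the degenerate case is the only loose point: if one block starts exactly at the saddle fixed point of its subsystem (e.g.\ $(x_1^{(0)},y_1^{(0)})=(a_0,c_0)$ with $a+c<1$) then that block stays there and does \emph{not} converge to the limit named in the statement, so the conclusion is not ``unaffected'' --- but this is an imprecision already present in the theorem as stated, which the paper's own proof does not address either.
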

\begin{proof}
Easily deduced from Theorem \ref{wset} and Theorem \ref{wuvset}.
\end{proof}

Let $a+c=1.$ Then, from (\ref{wxykso}) we get $x'+y'=x+y.$
Without loss of generality we assume $x+y=1$ in the operator $W_1$. Then we denoted by $T$, the following function
\begin{equation}\label{T}
T: x'=(2a-1)x^2+\left((1-a)(2-c_0)-aa_0\right)x+aa_0.
\end{equation}
Denote $$S^*=[0,1].$$
The following lemma is useful
\begin{lemma}
If conditions (\ref{parshart}) are satisfied then the function $T$ (defined by (\ref{T})) maps $S^*$ to itself.
\end{lemma}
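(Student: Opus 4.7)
The plan is to verify directly that $0 \leq T(x) \leq 1$ for every $x \in [0,1]$ by exhibiting two algebraic decompositions in which each summand is manifestly non-negative under condition (\ref{parshart}). Simply invoking stochasticity of the underlying operator $W_1$ is not enough, because $W_1$ was introduced on the open rectangle $(0,a_0)\times(0,c_0)$, whereas we are asked to control $T$ on the strictly larger interval $S^{*}=[0,1]$.

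For the lower bound $T(x)\geq 0$, I would start from the unexpanded form
$$T(x) = x + (1-a)x\bigl(1-c_0-x\bigr) + a(a_0-x)(1-x),$$
which is what the first equation of $W_1$ becomes after setting $c = 1-a$ and $y = 1-x$ (this is the calculation that produces (\ref{T})). A direct rearrangement gives
$$T(x) = (1-a)(1-c_0)\,x + a\,x^{2} + (1-x)\bigl[(1-a)x + a a_0\bigr],$$
and each of the three summands is non-negative on $[0,1]$ under (\ref{parshart}): the first two trivially, and the third because both $1-x$ and $(1-a)x + aa_0$ are non-negative there. The identity itself is checked by matching the coefficients of $1$, $x$, $x^{2}$ against (\ref{T}).

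For the upper bound $T(x)\leq 1$, I would exploit the invariance $x'+y' = x+y$ that holds in the $a+c=1$ regime (already used in the proof of Theorem \ref{wset} and restated just before (\ref{T})). Substituting $y = 1-x$ into the second equation of $W_1$ and regrouping yields
$$1 - T(x) = (1-a)c_0\,x + (1-x)\bigl[(1-aa_0) + (2a-1)x\bigr].$$
The first summand is non-negative on $[0,1]$. For the second, the affine function $\ell(x) := (1-aa_0) + (2a-1)x$ satisfies $\ell(0) = 1 - aa_0 > 0$ and $\ell(1) = a(2 - a_0) > 0$ under (\ref{parshart}), so $\ell(x)>0$ on $[0,1]$; hence the second summand is non-negative as well, giving $1 - T(x) \geq 0$.

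The only mildly subtle point is choosing the decomposition for $1 - T(x)$: the coefficient $1 - 2a$ of $x^{2}$ changes sign with $a$, so a naive separation into positive and negative monomials does not work. The trick is to absorb the sign-changing quantity $(2a-1)x$ into the affine factor $\ell(x)$, where its negative contribution when $a<1/2$ is dominated by the constant $1 - aa_0$, and to verify this by evaluating $\ell$ only at the two endpoints of $[0,1]$. Once both decompositions are in hand, the rest is routine expansion.
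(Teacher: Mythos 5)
Your proof is correct and complete, but it follows a genuinely different route from the paper. The paper checks the endpoint values $T(0)=aa_0$ and $T(1)=1-c_0(1-a)$ and then splits on the sign of the leading coefficient $2a-1$: when $2a-1\geq 0$ it establishes $T(x)\geq 0$ via the decomposition $T(x)=(2a-1)x^2+(1-a)(2-c_0)x+aa_0(1-x)$, and when $2a-1<0$ it argues that $T(x)-1=0$ has no root in $(0,1)$ to get $T(x)\leq 1$; the remaining two bounds are left to the implicit convexity/concavity of the quadratic together with the endpoint values. You instead produce two sign decompositions,
\[
T(x)=(1-a)(1-c_0)x+ax^2+(1-x)\bigl[(1-a)x+aa_0\bigr],\qquad
1-T(x)=(1-a)c_0x+(1-x)\bigl[(1-aa_0)+(2a-1)x\bigr],
\]
both of which I have checked by coefficient matching against (\ref{T}); each summand is nonnegative on $[0,1]$ under (\ref{parshart}), the only nontrivial point being the positivity of the affine factor $\ell(x)=(1-aa_0)+(2a-1)x$, which you correctly settle from $\ell(0)=1-aa_0>0$ and $\ell(1)=a(2-a_0)>0$. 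What your approach buys is a uniform, case-free argument that proves both bounds explicitly for all $a\in(0,1)$, whereas the paper's version is shorter but leaves half of the required inequalities to be inferred from the convexity of the parabola and a root-location claim that is asserted rather than computed. Your preliminary remark that one cannot simply cite stochasticity of $W_1$ (defined on $(0,a_0)\times(0,c_0)$, not on $[0,1]$) is also a fair and worthwhile observation.
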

\begin{proof} We want to show that if $x\in S^*$ then $x'=T(x)\in S^*$. $$T(0)=aa_0\in S^*,\ T(1)=1-c_0(1-a)\in S^*.$$ If $2a-1\geq0,$ then $T(x)=(2a-1)x^2+(1-a)(2-c_0)x+aa_0(1-x)\geq0.$
If $2a-1<0$, then the equation $T(x)-1=0$ has no real roots in the interval $(0,1)$. So, $T(x)\leq1$.
\end{proof}

Let us find fixed points of the function $T$. By solving the equation $T(x)=x,$ we get $t_1$,$t_2$ when $a\neq\frac{1}{2}$, and $t_3$ when $a=\frac{1}{2}$.
\begin{equation}\label{t1t2t3}
\begin{array}{lll}
t_1=\frac{1}{2(2a-1)}\left(1+aa_0-(1-a)(2-c_0)-\sqrt{D}\right),\\
t_2=\frac{1}{2(2a-1)}\left(1+aa_0-(1-a)(2-c_0)+\sqrt{D}\right),\\
t_3=\frac{a_0}{a_0+c_0},
\end{array}
\end{equation}
where $D=(1+aa_0-(1-a)(2-c_0))^2-4aa_0(2a-1).$

It is easy to check if $a>\frac{1}{2}$, then $0<t_1<1<t_2$, if $a<\frac{1}{2}$, then $t_2<0<t_1<1.$

Summarizing we formulate the following
\begin{pro}\label{T1} The set $Fix(T)$ has the following form
\begin{equation}\label{fixT}
\begin{array}{ll}
    Fix(T)=\left\{
                                            \begin{array}{lll}
                                              t_1, & \hbox{if \ $a\neq\frac{1}{2}$,}\\[1mm]
                                              t_3, & \hbox{if \ $a=\frac{1}{2}$,}\\
                                             \end{array}\right. \end{array}
\end{equation}
\end{pro}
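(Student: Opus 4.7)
The plan is to split on whether the leading coefficient of $T$ vanishes. Writing $T(x)=x$ out gives
$$
(2a-1)x^{2}+\bigl((1-a)(2-c_{0})-aa_{0}-1\bigr)x+aa_{0}=0.
$$
Denote by $p(x)$ the left-hand side. Fixed points of $T$ that belong to $S^{*}=[0,1]$ are exactly the roots of $p$ in $[0,1]$, so it suffices to locate the roots of $p$ and discard anything outside $[0,1]$.

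\textbf{Case $a=\tfrac12$.} The quadratic term drops out and a short simplification shows the equation reduces to $-\tfrac{a_{0}+c_{0}}{2}\,x+\tfrac{a_{0}}{2}=0$. Since $a_{0}+c_{0}>0$ this has the single solution $x=a_{0}/(a_{0}+c_{0})=t_{3}$, and $t_{3}\in(0,1)$ because $a_{0},c_{0}\in(0,1)$.

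\textbf{Case $a\ne\tfrac12$.} Now $p$ is a genuine quadratic with discriminant $D=(1+aa_{0}-(1-a)(2-c_{0}))^{2}-4aa_{0}(2a-1)$, which I must first check is nonnegative; this follows from evaluating $p$ at the endpoints and exhibiting a sign change (see below). The quadratic formula then yields exactly the two candidate fixed points $t_{1}$ and $t_{2}$ of (\ref{t1t2t3}). To prove that only $t_{1}$ lies in $S^{*}$, I evaluate
$$
p(0)=aa_{0}>0,\qquad p(1)=(2a-1)+(1-a)(2-c_{0})-aa_{0}-1+aa_{0}=-(1-a)c_{0}<0.
$$
By the intermediate value theorem $p$ has exactly one root $t_{1}\in(0,1)$ (this also forces $D>0$, confirming the discriminant is positive). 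For the location of the second root I split on the sign of $2a-1$: if $a>\tfrac12$ the parabola opens upward, and since $p(1)<0$ the second root must satisfy $t_{2}>1$; if $a<\tfrac12$, Vieta's product formula gives $t_{1}t_{2}=aa_{0}/(2a-1)<0$, forcing $t_{2}<0$. In either subcase $t_{2}\notin S^{*}$, so $Fix(T)=\{t_{1}\}$.

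The proof is essentially bookkeeping; no step is conceptually subtle. The one piece that needs care is the sign computation of $p(1)$, because the $aa_{0}$ term must cancel in order to obtain the clean value $-(1-a)c_{0}$ that drives both the existence of $t_{1}\in(0,1)$ and, via the sign of the leading coefficient, the expulsion of $t_{2}$ from $[0,1]$. Once that cancellation is in hand, the two subcases $a\gtrless\tfrac12$ are dispatched by the standard quadratic dichotomy described above.
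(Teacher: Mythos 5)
Your proof is correct and follows essentially the same route as the paper: solve $T(x)=x$, treat $a=\tfrac12$ as the degenerate linear case yielding $t_3$, and for $a\neq\tfrac12$ show that of the two roots $t_1,t_2$ only $t_1$ lies in $S^*$. The paper merely asserts the root locations ($0<t_1<1<t_2$ for $a>\tfrac12$, $t_2<0<t_1<1$ for $a<\tfrac12$) as ``easy to check''; your endpoint computation $p(0)=aa_0>0$, $p(1)=-(1-a)c_0<0$ together with the Vieta/convexity dichotomy is a clean and correct justification of exactly those claims.
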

Suppose $t^*$ is a fixed point for $T$. For one dimensional dynamical systems it is known that $t^*$ is an attracting fixed point if $|T'(t^*)|<1$. The point $t^*$ is a repelling fixed point if $|T'(t^*)|>1$. Finally, if $|T'(t^*)|=1$, the fixed point is saddle \cite{BZ}.

Let us calculate the derivative of $T'(x)$ at the fixed point $t^*$.
$$T'(t^*)=2(2a-1)t^*+(1-a)(2-c_0)-aa_0.$$
Putting the points $t_1$, $t_3$ instead of $t^*$ in $T'(t^*)$, we get the following
$$T'(t_1)=1-\sqrt{D},\ T'(t_3)=\frac{1}{2}(1-a_0-c_0).$$
Thus for type of $t^*$ the following proposition holds.
\begin{pro}\label{T2} The type of the fixed point $t^*$ for (\ref{T}) is attracting.
\end{pro}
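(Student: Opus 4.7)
The plan is to verify the hyperbolicity condition $|T'(t^*)|<1$ at each of the two types of fixed points listed in Proposition~\ref{T1}, using the explicit expressions for $T'(t_1)$ and $T'(t_3)$ computed immediately before the statement.

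At $a=\tfrac12$ the quadratic term in $T$ vanishes, so $T$ is an affine function and $T'(t_3)$ is simply its slope. Since $a_0,c_0\in(0,1)$ this slope lies strictly between $-1$ and $1$, which settles this case at once.

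The substantive case is $a\neq\tfrac12$, where $T'(t_1)=1-\sqrt{D}$ and the task reduces to the numerical estimate $0<D<4$. I would handle this by algebraic manipulation of $D$ rather than by splitting into the subcases $a<\tfrac12$ and $a>\tfrac12$. Writing $E_1=2a-1$, $E_2=aa_0$, $E_3=(1-a)c_0$, so that $B_0:=1+aa_0-(1-a)(2-c_0)=E_1+E_2+E_3$, one verifies the symmetric identity
$$D \;=\; B_0^{\,2}-4E_1E_2 \;=\; (E_1-E_2+E_3)^2 + 4E_2E_3.$$
The term $4E_2E_3=4a(1-a)a_0c_0$ is strictly positive under~\eqref{parshart}, giving $D>0$ (so $t_1$ is a simple fixed point and $\sqrt D>0$, hence $T'(t_1)<1$). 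For the upper bound, set $A=E_1-E_2+E_3$. Using $a_0,c_0\in(0,1)$ one shows $a-1<A<a$, whence $A^2<1$; independently $a(1-a)\le 1/4$ and $a_0c_0<1$ give $4E_2E_3<1$. Adding these two estimates yields $D<2<4$, so $\sqrt D<\sqrt 2$ and therefore $T'(t_1)\in(1-\sqrt 2,\,1)\subset(-1,1)$.

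The main potential obstacle is the change of sign of $2a-1$, which makes the formulas for $t_1,t_2$ in \eqref{t1t2t3} awkward to handle directly; the plan circumvents this by extracting all the needed information from $D$ through the symmetric identity above, where no case split is required. Once that identity is in hand, the remaining bounds are elementary inequalities following immediately from $a, a_0, c_0\in(0,1)$.
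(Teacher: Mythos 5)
Your proposal is correct, and it follows the same route the paper sets up: compute $T'$ at the fixed points and verify $|T'(t^*)|<1$. The difference is that the paper stops after displaying $T'(t_1)=1-\sqrt D$ and $T'(t_3)$ and states the proposition with no further argument, leaving the crucial estimate $0<D<4$ unproved; your identity $D=(E_1-E_2+E_3)^2+4E_2E_3$ (which I checked: expanding, $(E_1+E_2+E_3)^2-(E_1-E_2+E_3)^2=4E_1E_2+4E_2E_3$) supplies exactly that missing step, and the bounds $a-1<A<a$ and $4a(1-a)a_0c_0<1$ do follow immediately from $a,a_0,c_0\in(0,1)$, giving $D<2$ without the sign split on $2a-1$. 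One small remark: the paper's displayed value $T'(t_3)=\tfrac12(1-a_0-c_0)$ appears to be a slip --- the slope of the affine map at $a=\tfrac12$ is $1-\tfrac{a_0+c_0}{2}\in(0,1)$ --- but your argument for that case does not depend on the erroneous formula, so your conclusion stands either way.
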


A point $x$ in $T$ is called periodic point of $T$ if there exists $p$ so that $T^p(x)=x$. The smallest positive integer $p$ satisfying $T^p(x)=x$ is called the prime period or least period of   the point $x$.
\begin{pro}\label{T3} For $p\geq2$ the function (\ref{T}) does not have any $p$-periodic point in the set $S^*$.
\end{pro}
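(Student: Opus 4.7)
The plan is to rule out periodic orbits of period exactly $2$ in $S^*=[0,1]$, and then to invoke Sharkovsky's theorem to propagate this to all periods $p\geq 2$.

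\textbf{Step 1 (no $2$-cycle).} Suppose for contradiction that $\{x_1,x_2\}\subset[0,1]$ with $x_1<x_2$ is a $2$-cycle of $T$, so $T(x_1)=x_2$ and $T(x_2)=x_1$. Since $(T(x_2)-T(x_1))/(x_2-x_1)=-1$, the mean value theorem gives some $\xi\in(x_1,x_2)\subset[0,1]$ with $T'(\xi)=-1$. Writing $A=2a-1$ and $B=(1-a)(2-c_0)-aa_0$, so that $T'(x)=2Ax+B$, the goal is to show that the equation $2A\xi+B=-1$ has no solution in $[0,1]$.

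If $a=\tfrac12$ then $A=0$ and $T$ is affine with constant slope $B=1-(a_0+c_0)/2\in(0,1)$, never $-1$. If $a\neq\tfrac12$ then $\xi=-(B+1)/(2A)$. Observe that $B+1>0$ because $(1-a)(2-c_0)>0$ and $aa_0<1$. For $a>\tfrac12$ we have $A>0$ and hence $\xi<0$. For $a<\tfrac12$ we have $A<0$ and hence $\xi>0$; the inequality $\xi\leq 1$ reduces to $B\leq -2A-1=1-4a$. However, a direct expansion yields
\[
B-(1-4a)=1+2a-c_0(1-a)-aa_0,
\]
which is strictly decreasing in each of $a_0$ and $c_0$, so its infimum on the open square $(0,1)^2$ is attained as $(a_0,c_0)\to(1,1)$ and equals $2a>0$. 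Hence $B>1-4a$, i.e.\ $\xi>1$. In every case $\xi\notin[0,1]$, contradicting $\xi\in(x_1,x_2)$.

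\textbf{Step 2 (Sharkovsky).} Since $T:[0,1]\to[0,1]$ is continuous (by the lemma establishing $T(S^*)\subset S^*$) and has no period-$2$ orbit, Sharkovsky's theorem implies that $T$ has no $p$-periodic point for any integer $p\geq 2$: indeed in the Sharkovsky order every $p\geq 2$ is $\succeq 2$, so the existence of a period-$p$ orbit with $p\geq 2$ would force the existence of a period-$2$ orbit, which has just been excluded.

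The main obstacle is the elementary but slightly fiddly inequality $B>1-4a$ in Step 1 when $a<\tfrac12$; everything else reduces to a routine application of the mean value theorem and the standard forcing direction of Sharkovsky's theorem.
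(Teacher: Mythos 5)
Your proof is correct, and the key step is carried out by a genuinely different method than the paper's. Both arguments share the same skeleton: rule out prime period $2$, then use the forcing direction of Sharkovsky's theorem (every $p\geq 2$ precedes $2$ in the Sharkovsky order) to exclude all periods $p\geq 2$; this second step is identical to the paper's. For the first step, however, the paper works with the explicit quadratic factor of $T(T(x))-x$ obtained after dividing out $T(x)-x$, and applies the Budan--Fourier sign-variation count at $0$ and $1$ to show that this quadratic has no root in $S^*$. You instead observe that a $2$-cycle $\{x_1,x_2\}\subset[0,1]$ forces, via the mean value theorem, a point $\xi\in(x_1,x_2)$ with $T'(\xi)=-1$, and then verify directly that the affine function $T'(x)=2(2a-1)x+(1-a)(2-c_0)-aa_0$ never takes the value $-1$ on $[0,1]$ under the standing hypotheses $a\in(0,1)$, $a_0,c_0\in(0,1)$. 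I checked your case analysis: for $a=\tfrac12$ the slope is $1-(a_0+c_0)/2\in(0,1)$; for $a>\tfrac12$ the unique solution of $T'(\xi)=-1$ is negative; for $a<\tfrac12$ your computation $B-(1-4a)=1+2a-c_0(1-a)-aa_0>2a>0$ shows it exceeds $1$. (Equivalently, and slightly more directly: both endpoint values $T'(0)=B>1-2a>-1$ and $T'(1)=2A+B>2a-1>-1$, so $T'>-1$ throughout $[0,1]$.) Your route avoids computing the second-iterate polynomial and the sign-variation bookkeeping entirely, at the cost of the elementary but fiddly inequality you flag; the paper's route is more mechanical but requires trusting the algebra of the quotient polynomial. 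One cosmetic remark: the infimum $2a$ in your Step 1 is approached but not attained on the open square $(0,1)^2$, so "attained as $(a_0,c_0)\to(1,1)$" should read "approached"; the strict inequality you need still holds.
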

\begin{proof}
Let us first describe periodic points with $p=2$ on $S^*,$ i.e.,to solution of the equation
\begin{equation}\label{per.2}
T(T(x))=x.\end{equation}
Note that the fixed points of $T$ are solutions to (\ref{per.2}), to find other solution we consider the equation
$$\frac{T(T(x))-x}{T(x)-x}=0,$$
simple calculations show that the last equation is equivalent to the following
\begin{equation}\label{per.22}(2a-1)^2x^{2}+(2a-1)(1-aa_0+(1-a)(2-c_0))x+(1-a)(2-c_0)+2a^2a_0+1=0.\end{equation}
If $a=\frac{1}{2}$, then equation (\ref{per.22}) has no solution in $S^*$ .

Consider
$$l(x)=(2a-1)^2x^{2}+(2a-1)(1-aa_0+(1-a)(2-c_0))x+(1-a)(2-c_0)+2a^2a_0+1,$$
$$l'(x)=2(2a-1)^2+(2a-1)(1-aa_0+(1-a)(2-c_0)),$$ $$l''(x)=2(2a-1)^2.$$
By $N(c)$ we denote the number of variations in sign in the ordered sequence of numberes
$$l(c),\ l'(c),\ l''(c).$$
If $a>\frac{1}{2}$ then $N(0)=0$ and $N(1) = 0$. If $a<\frac{1}{2}$ then $N(0) = 2$ and $N(1) = 2$. According to the Budan-Foruier theorem, equation (\ref{per.22}) has no real root in the set $S^*$. Thus function (\ref{T}) does not have any 2-periodic point in $S^*$. Since $T$ is continuous on $S^*$ by Sharkovskii's theorem (\cite{R}) we have that $T^p(x)=x$ does
not have solution (except fixed) for all $p\geq2$.
\end{proof}
The following theorem describes the trajectory of any point $x_{0}$ in $S^*.$
\begin{thm}\label{wsetT} For the function $T$ given by (\ref{T}) (i.e. under condition (\ref{parshart})) and for any initial point $x_0\in S^*\setminus Fix(T)$ the following hold
$$\lim_{n\to \infty}x^{(n)}=\left\{\begin{array}{lll}
                                              t_1, & \hbox{if \ $a\neq\frac{1}{2}$,}\\[1mm]
                                              t_3, & \hbox{if \ $a=\frac{1}{2}$,}\\[1mm]
                                              %0, & \hbox{if \ $a>\frac{1}{2}$,\ $a_0=0, c_0\neq0$,\ or \ $\frac{1-c_0}{2-c_0}\leq a<\frac{1}{2}$,\ $a_0=0, c_0\neq0$,}\\ [1mm]
                                               % &  \hbox{or\ \ $a>\frac{1}{2}$,\ $a_0=0, c_0=0$,} \\
                                              %1, & \hbox{if \ $a<\frac{1}{2}$,\ $a_0\neq0, c_0=0$,\ or \ $\frac{1}{2}<a\leq\frac{1}{2-a_0}$,\ $a_0\neq0, c_0=0$,}\\ [1mm]
                                               % &  \hbox{or\ \ $a<\frac{1}{2}$,\ $a_0=0, c_0=0$,} \\
                                            %t_4, & \hbox{if \ $a<\frac{1-c_0}{2-c_0}$,\ $a_0=0, c_0\neq0$,}\\ [1mm]
                                            % t_5, & \hbox{if \ $a>\frac{1}{2-a_0}$,\ $a_0\neq0, c_0=0$.}\\
\end{array}\right.$$
where $t_1$ and $t_3$ are defined by (\ref{t1t2t3}) and $x^{(n)}=T^n(x_{0})$, with $T^n$ is $n$-th iteration of $T$.
\end{thm}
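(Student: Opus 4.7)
The plan is to combine the four preceding results about $T$ on $S^{*}=[0,1]$ — the self-map property, uniqueness of the fixed point $t^{*}$, its attracting nature, and the absence of higher-period cycles — with a standard one-dimensional dynamics argument to conclude convergence to $t^{*}$.

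First I would observe that, by the preceding lemma, $T:S^{*}\to S^{*}$ is a continuous self-map of the compact interval $S^{*}=[0,1]$. Consequently, for every $x_{0}\in S^{*}$ the orbit $\{x^{(n)}\}_{n\geq 0}$ is bounded and its $\omega$-limit set $\omega(x_{0})$ is a non-empty, compact, $T$-invariant subset of $S^{*}$. Next, by Proposition \ref{T3}, $T$ has no periodic points of prime period $p\geq 2$ in $S^{*}$. I would then invoke the standard consequence of Sharkovskii's theorem (already cited in the proof of Proposition \ref{T3}) that, for a continuous self-map of a compact interval whose only periodic points are fixed points, the $\omega$-limit set of every orbit is contained in the fixed-point set. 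By Proposition \ref{T1}, the only fixed point in $S^{*}$ is $t_{1}$ when $a\neq\frac{1}{2}$ and $t_{3}$ when $a=\frac{1}{2}$, so $\omega(x_{0})=\{t^{*}\}$; a bounded real sequence with singleton $\omega$-limit set must converge to that point. Proposition \ref{T2}, which asserts that $t^{*}$ is attracting, supplies an independent consistency check.

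The main obstacle is justifying cleanly the implication ``no periodic points of period $\geq 2$'' $\Longrightarrow$ ``$\omega(x_{0})\subset Fix(T)$.'' If one prefers to avoid direct appeal to the general interval-dynamics theorem, a monotonicity argument works: the quadratic $T(x)-x$ has roots exactly at the fixed points $t_{1},t_{2}$ of $T$, and since $t_{2}\notin S^{*}$ when $a\neq\frac{1}{2}$, the sign of $T(x)-x$ is constant on each side of $t_{1}$ in $S^{*}$. Together with $T(S^{*})\subset S^{*}$ this forces $\{x^{(n)}\}$ to be eventually monotone and bounded, hence convergent, and the recursion then leaves $t_{1}$ as the only possible limit. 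The case $a=\frac{1}{2}$ reduces to the obvious convergence for an affine contraction of slope $\tfrac{2-a_{0}-c_{0}}{2}\in(0,1)$ toward its unique fixed point $t_{3}$.
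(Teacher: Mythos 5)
Your main argument is essentially the paper's: the published proof is the one-line assertion that the theorem follows from Propositions \ref{T1}, \ref{T2} and \ref{T3}, and your elaboration via the $\omega$-limit set and the standard interval-dynamics fact that a continuous self-map of a compact interval with no periodic points of prime period $\geq 2$ has every orbit converging to a fixed point is exactly the intended (and correct) way to make that deduction rigorous. One minor caveat on your optional fallback: the sign of $T(x)-x$ on either side of $t_1$ does not by itself force eventual monotonicity of the orbit, since $T$ need not be monotone on $S^*$ (e.g.\ for $a$ close to $1$ and $a_0$ close to $1$ one has $T'(0)<0$) and the iterates can overshoot $t_1$; so that alternative still needs the no-period-two input or an invariance/monotonicity argument on a subinterval, and it is the first argument that should carry the proof.
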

\begin{proof} The proof follows from Propositions \ref{T1}, \ref{T2}, and \ref{T3}.
\end{proof}
By using Theorem \ref{wsetT}, when  $a+c=1$ it is possible to describe the dynamics of the operator  $W_1$ at the set $S_1$ (see Figure 3).
\begin{center}
   \begin {tikzpicture} [scale=0.60]
   %Coordinates
   \draw[->, thick] (0,0) -- (0,8);
   \draw[->, thick] (0,0) -- (11,0);

   \draw[thick] (0,7) --  (10, 7);
   \draw[thick] (10,0) --  (10, 7);
   \draw[->, thick, red, dashed] (0.1, 0.1) .. controls (2.5,3.5) and (3,4) .. (9.9,6.9);
   \draw[thick,dashed] (0, 7) -- (10,0);
   \draw[thick,dashed] (0, 2.3) -- (3.3,0);
   \draw[thick,dashed] (0, 4.6) -- (6.6,0);
   \draw[thick,dashed] (3.3, 7) -- (10,2.3);
   \draw[thick,dashed] (6.6, 7) -- (10, 4.6);

   \draw[->, thick,dashed] (2, 5.6) -- (2.1,5.5);
   \draw[->, thick,dashed] (6.6, 2.38) -- (6.55,2.42);
    \draw[->, thick,dashed] (5, 3.5) -- (4.9,3.55);
    \draw[->, thick,dashed] (8.2, 1.26) --  (8.1,1.3);
\draw[->, thick,dashed] (0.6, 1.88) --  (0.7,1.78);
\draw[->, thick,dashed] (1, 3.90) --  (1.1,3.8);
\draw[->, thick,dashed] (3.3, 2.3) --  (3.2,2.35);
\draw[->, thick,dashed] (5, 1.12) --  (4.9,1.16);
\draw[->, thick,dashed] (5, 5.81) --  (5.1,5.71);
\draw[->, thick,dashed] (7, 4.4) --  (6.9,4.44);
\draw[->, thick,dashed] (8.3, 3.49) --  (8.2,3.55);
\draw[->, thick,dashed] (8.6, 5.59) --  (8.5,5.65);
\draw[->, thick,dashed] (7, 6.72) --  (7.1,6.65);
\draw[->, thick,dashed] (2, 0.91) --  (1.9,0.96);
\filldraw[red] (0,0) circle (2pt);
\filldraw[red] (10,7) circle (2pt);
\filldraw[red] (2.4,2.93) circle (2pt);
\filldraw[red] (1.15,1.5) circle (2pt);
\filldraw[red] (4.02,4.18) circle (2pt);
\filldraw[red] (5.95,5.15) circle (2pt);
\filldraw[red] (7.95,6.05) circle (2pt);

    %nodes x axes
   \node[left] at (10.5,-0.5){$a_0$};
   \node[above] at (11,0){$x$};
    %nodes y axes
   \node[left] at (0,7){$c_0$};
   \node[right] at (0,8){$y$};
   \node[left] at (0,0){$(0,0)$};
   \node[right] at (10,7){$(a_0,c_0)$};
\end{tikzpicture}

{Figure 3.}\label{fig.3} $a+c=1.$
\end{center}

\section{\bf{Biological interpretation of the results}}

In biology, a population genetics is the study of the distributions and changes of allele (type) frequency in a population.

The results formulated in previous sections have the following biological interpretations:

Let $z^{(0)}=(x^{(0)}, y^{(0))}\in S$ be an initial state, i.e., $x^{(0)}$(resp. $y^{(0)}$) is the probability distribution on the set $\{1, ..., n\}$ (resp. $\{1, ..., \nu\})$ of genotypes. Assume the trajectory $z^{(m)}$ of this point has a limit $z=((x_1, ..., x_n), (y_1, ..., y_\nu))$ this means that the future of the population is stable: female genotypes $i$ survives with probability $x_i$ and male genotype $j$ survives with probability $y_j$. A genotype will disappear if its probability is zero.

The case of two vertices ($n=\nu=2$):
\begin{itemize}
  \item [(a)] The population has a continuum set of equilibrium states (Proposition \ref{wfix}), it stays in a neighborhood of one of the equilibrium states of the population (stable fixed point).
  \item [(b)]  Let $a$ and $b$ be the probability of birth of type 1 female and  type 1 male from a type 1 male and a type 2 female, respectively.
\item [(b.1)] If the female and male of type 1 have probabilities $x^{(0)}$ and $y^{(0)}$, respectively, and the probability of female of type 2 is greater than $ay^{(0)}/(1-b)$, then  in the future evolution, both types of female and the  type 2 of male survive with the probability  $x^{(0)}+ay^{(0)}/(1-b)$, $1-x^{(0 )}-ay^{(0)}/(1-b)$ and 1, correspondingly. As a result, type 1 of the male disappears.
\item [(b.2)] If the female and male of type 1 have probabilities $x^{(0)}$ and $y^{(0)}$, respectively, and the probability of female of type 2 is less than $ay^{(0)}/(1-b)$, then  in the future evolution, both types of male and the  type 1 of female survive with the probability  $(1-b)(x^{(0)}-1)/a+y^{(0)}$, $1-(1-b)(x^{(0)}-1)/a+y^{(0)}$ and 1, correspondingly. As a result, type 2 of the female disappears.
\item [(b.3)]If the female and male of type 1 have probabilities $x^{(0)}$ and $y^{(0)}$, respectively, and the probability of female of type 2 equals  $ay^{(0)}/(1-b)$, then  in the future evolution,  type 2 of male and the  type 1 of female survive with the probability   1, correspondingly. As a result, type 2 of the female and type 1 of male disappear(interpretation of  Theorem \ref{v1set}).
\end{itemize}
The case of one edge and three vertices ($n=\nu=4$):
\begin{itemize}
  \item [(c)] Depending on the parameters the population my have two or infinitely many (continuum) equilibrium states (Proposition \ref{pro5}). The population stays in a neighbor-hood of one of the equilibrium states of the population (stable fixed point).
\item[(d)] Let $a$ and $c$ be the probability of birth of type 1 female and  type 1 male from either a type 1 male and a type 2 female or a type 2 male and a type 1 female, respectively. Let $b$ and $d$ be the probability of birth of type 3 female and  type 3 male from either a type 3 male and a type 4 female or a type 4 male and a type 3 female, respectively.
\item [(d.1)] If $a+c<1,\b+d<1$, in future evolution, the  types 2 and 4 of female and male survive with the probability $a_0,\ 1-a_0$, $c_0,\ 1-c_0$, respectively, and types 1 and 3 of the female and male disappear.
\item [(d.2)] If $a+c<1,\ b+d>1$, in future evolution, the  types 2 and 3 of female and male survive with the probability $a_0,\ 1-a_0$, $c_0,\ 1-c_0$, respectively, and types 1 and 4 of the female and male disappear.
\item [(d.3)] If $a+c>1,\ b+d<1$, in future evolution, the  types 1 and 4 of female and male survive with the probability $a_0,\ 1-a_0$, $c_0,\ 1-c_0$, respectively, and types 2 and 3 of the female and male disappear.
\item [(d.4)] If $a+c>1,\ b+d>1$,  in future evolution, the  types 1 and 3 of female and male survive with the probability $a_0,\ 1-a_0$, $c_0,\ 1-c_0$, respectively, and types 2 and 4 of the female and male disappear(interpretation of  Theorem \ref{v2set}).
\end{itemize}

\subsection{Acknowledgement.}  I am grateful to Professor U.A.Rozikov for the comments and suggestions.

\end{document}